\documentclass{article}  
\usepackage{amsthm}
\usepackage{amsmath,amssymb,bm}

\usepackage[margin=1in]{geometry}
\usepackage{graphicx}
\begin{document}

\newcommand{\la}{\lesssim}
\newcommand{\ga}{\gtrsim}
\newcommand{\bE}{\ensuremath{\mathbf{E}}}
\newtheorem{theorem}{Theorem}[section]
\newtheorem{proposition}[theorem]{Proposition}
\newtheorem{conjecture}[theorem]{Conjecture}
\newtheorem{observation}[theorem]{Observation}
\newtheorem{lemma}[theorem]{Lemma}
\newtheorem{corollary}[theorem]{Corollary}
\newtheorem{definition}[theorem]{Definition}

\title{Some results on chromatic number as a function of triangle count}

\author{David G. Harris\thanks{Department of Computer Science, University of Maryland, 
College Park, MD 20742.
Email: \texttt{davidgharris29@gmail.com}}}

\maketitle
\begin{abstract}
A variety of powerful extremal results have been shown for the chromatic number of triangle-free graphs. Three noteworthy bounds are in terms of the number of vertices, edges, and maximum degree given by Poljak \& Tuza (1994), and Johansson.  There have been comparatively fewer works extending these types of bounds to graphs with a small number of triangles. One noteworthy exception is a result of Alon et. al (1999) bounding the chromatic number for graphs with low degree and few triangles per vertex; this bound is nearly the same as for triangle-free graphs. This type of parametrization is much less rigid, and has appeared in dozens of combinatorial constructions.

In this paper, we show a similar type of result for $\chi(G)$ as a function of the number of vertices $n$,  the number of edges $m$, as well as the triangle count (both local and global measures). Our results smoothly interpolate between the generic bounds true for all graphs and bounds for triangle-free graphs. Our results are tight for most of these cases; we show how an open problem regarding fractional chromatic number and degeneracy in triangle-free graphs can resolve the small remaining gap in our bounds.
\end{abstract}

\section{Introduction}
In this paper, we examine extremal bounds on the chromatic number of an undirected graph $G = (V,E)$ which has relatively few triangles (a triangle is a triple of vertices $x,y,z \in V$ where all three edges $(x,y), (x,z), (y,z)$ are present in $G$). For way of motivation,  a variety of powerful results have been shown for the chromatic number of \emph{triangle-free} graphs. One noteworthy bound, which we explore further in this paper, is in terms of the number of vertices or edges:\footnote{See also \cite{gimbel} and \cite{nilli} for crisper proofs of these results.}
\begin{theorem}[\cite{poljak-tuza}]
\label{triangle-free-thm1}
Suppose $G$ is triangle-free with $n$ vertices and $m$ edges. Then
$$
\chi(G) \leq O \Bigl( \min(\sqrt{ \frac{n}{\log n}}, \frac{m^{1/3}}{\log^{2/3} m}) \Bigr)
$$
\end{theorem}

Another powerful bound for triangle-free graphs can be given in terms of the maximum degree:\footnote{Theorem~\ref{triangle-free-d} is attributed to Johansson, as attributed by \cite{molloy}. See \cite{molloy2} for a more recent proof, which also gives bounds on the constant term.}
\begin{theorem}
\label{triangle-free-d}
Suppose $G$ is triangle-free and has maximum degree $d$. Then $\chi(G) \leq O \Bigl( \frac{d}{\log d} \Bigr)$
\end{theorem}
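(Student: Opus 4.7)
\textbf{Proof proposal for Theorem~\ref{triangle-free-d}.} The plan is to prove the stronger list-chromatic statement $\chi_\ell(G) = O(d/\log d)$ via an iterative semi-random coloring procedure, analyzed either by the Lov\'asz Local Lemma (LLL) or by an entropy-compression argument. I would assign every vertex a list $L(v)$ of $L = C d/\log d$ colors (for a suitable constant $C$) and proceed in rounds: at each round, each still-uncolored vertex $v$ tentatively samples a uniformly random color $c(v)$ from its current list, then commits to $c(v)$ if no neighbor tentatively chose the same color, in which case $c(v)$ is removed from the lists of $v$'s remaining uncolored neighbors.

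The key invariants to track for each uncolored vertex $v$ are $\ell_t(v)$, the size of its list at round $t$, and $d_t(v)$, its number of uncolored neighbors whose list still contains any given color; the central goal is to show that the ratio $\ell_t(v) / d_t(v)$ stays bounded below by some slowly growing function, e.g.\ $\ell_t / d_t \gtrsim \log \ell_t$. The triangle-freeness hypothesis enters here in a crucial way: since the neighborhood $N(v)$ is an independent set, distinct colors get removed from $L(v)$ through nearly-independent events across $N(v)$, so a color survives in $L(v)$ with probability close to $(1-1/\ell_t)^{d_t}$ rather than suffering from correlated losses caused by common triangles. This independence-like behavior means that a color is retained with probability roughly $e^{-d_t/\ell_t}$, and simultaneously a tentative color is kept with comparable probability, yielding the recursion $\ell_{t+1}/d_{t+1} \gtrsim (1+\epsilon) \ell_t/d_t$ in expectation. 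Iterating $O(\log \log d)$ rounds drives the ratio high enough that greedy coloring finishes the job.

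The hardest step, and the one where the technical weight lies, is converting these expected-value recursions into high-probability (or positive-probability) statements that hold \emph{simultaneously} for every vertex. The cleanest route is an LLL argument on the bad events ``$\ell_{t+1}(v)$ is too small'' and ``$d_{t+1}(v)$ is too large,'' using Talagrand or bounded-differences concentration to show each bad event has probability $e^{-\Omega(\log^2 d)}$ while its dependency neighborhood has size $d^{O(1)}$, so the LLL condition $\mathrm{Pr}(\text{bad}) \cdot e \cdot (\text{dep.}) \le 1$ is satisfied with room to spare. Alternatively, the Moser--Tardos / entropy-compression framework of Molloy \cite{molloy2} avoids Talagrand by directly analyzing the resampling log; I would most likely follow that route, as it gives cleaner constants and handles the list-chromatic case uniformly. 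The main obstacle throughout will be bookkeeping the dependencies between $\ell_t$ and $d_t$ across rounds carefully enough that the concentration estimates remain valid despite the conditioning induced by LLL.
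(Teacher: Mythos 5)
You should first note that the paper does not prove Theorem~\ref{triangle-free-d} at all: it is imported as a known result of Johansson, with the footnote pointing to Molloy--Reed \cite{molloy} for the original argument and to Molloy \cite{molloy2} for the modern entropy-compression proof. Your proposal is therefore being compared not against a proof in the paper but against the cited literature, and the route you describe --- lists of size $Cd/\log d$, iterated semi-random coloring, tracking list sizes $\ell_t(v)$ and color-degrees $d_t(v)$, using triangle-freeness to get near-independence of color removals across the independent set $N(v)$, and closing with LLL-plus-concentration or with entropy compression --- is exactly the strategy of those sources. So you have correctly identified the proof.

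However, as a proof it has a genuine gap rather than a checkable argument: everything you label as ``the hardest step'' is precisely where the entire difficulty of Johansson's theorem lives, and none of it is carried out. Specifically: (i) the survival recursion for $\ell_{t+1}$ and $d_{t+1}$ is asserted in expectation but never derived, and the claimed form ``$\ell_{t+1}/d_{t+1} \gtrsim (1+\epsilon)\,\ell_t/d_t$'' needs real work --- note that initially $\ell_0/d_0 \approx C/\log d \ll 1$, so the engine of the proof is that color-degrees decay strictly faster than list sizes (a neighbor remains a competitor for color $c$ only if it both stays uncolored \emph{and} retains $c$), and this asymmetry must be established quantitatively; (ii) the concentration step is nontrivial because $\ell_{t+1}(v)$ is not a sum of independent indicators (the removals are only \emph{nearly} independent, and the Talagrand/certificate argument for the equalizing-coin-flip version of the procedure is delicate); (iii) the conditioning across rounds, which you flag as ``the main obstacle,'' is exactly the bookkeeping that consumes most of the forty-odd pages of the original write-ups. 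A sketch that names the obstacles without resolving them does not establish the theorem. Since the paper treats this as a black-box citation, the appropriate fix is either to cite \cite{molloy2} (as the paper does) or to actually execute the entropy-compression analysis you allude to; the former is what I would recommend.
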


These bounds are much smaller than would be possible for a generic graph (with no restriction on the number of triangles); in those cases one can only show the bounds
$$
\chi(G) \leq \min(n, \sqrt{m}, d+1)
$$

The requirement that a graph has no triangles is quite rigid. There have been comparatively fewer works which relax the triangle-free condition to allow a small number of triangles. One noteworthy exception to this is the result of \cite{aks}:
\begin{theorem}[\cite{aks}]
\label{aks-thm}
Suppose the graph $G$ has maximum degree $d$ and each vertex incident on at most $y$ triangles, where $1 \leq y \leq d^2/2$. Then 
$$
\chi(G) \leq O( \frac{d}{\log(d^2/y)} )
$$
\end{theorem}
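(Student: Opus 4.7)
The natural plan is to adapt Johansson's semi-random ``nibble'' coloring scheme used for Theorem~\ref{triangle-free-d}, with care taken to absorb the additional error terms coming from triangles (as in~\cite{aks}). Set $k = C\,d / \ln(d^2/y)$ for a sufficiently large constant $C$; I would construct a proper $k$-coloring through many rounds of randomized partial coloring, combined with the Lov\'asz Local Lemma.

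Each vertex $v$ maintains a list $L(v) \subseteq [k]$ of available colors, initially $L(v) = [k]$. In each round, every uncolored vertex tentatively picks a uniformly random color from its current list and keeps it if no neighbor picked the same color; additionally, each surviving color is dropped from each $L(v)$ with a small calibrated probability, to maintain near-uniformity of the residual process. One defines bad events of the form ``$L(v)$ shrank by more than a constant factor'' and ``too many neighbors of $v$ still compete for color $c$''. Each such event depends on only $\text{poly}(d)$ random variables with probability much smaller than $1/\text{poly}(d)$, so the LLL guarantees a positive-probability outcome realizing none of them. Iterating for $O(\log k)$ rounds drives the residual degree below the list size, after which a greedy completion finishes the coloring using no additional colors.

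The main obstacle is the concentration analysis of the per-color retention probability. In the triangle-free case, any two neighbors $u_1, u_2$ of $v$ that both tentatively pick the same color $c$ are non-adjacent and so do not conflict with each other, which yields a clean exponential lower bound on the retention rate for each color. When triangles are present, each triangle $v u_1 u_2$ produces an aligned pair of neighbors that do conflict on $c$, contributing a correlated error term of magnitude governed by $y$. The delicate step is to bound this error precisely and verify that it is dominated by the main term at every iteration; the hypothesis $1 \leq y \leq d^2/2$ is exactly what ensures the recursion closes at the claimed rate $k = O(d/\ln(d^2/y))$. Once this quantitative concentration is in hand, the rest of the nibble proceeds essentially as in the triangle-free case.
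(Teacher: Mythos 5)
Your outline defers exactly the step that constitutes the entire technical content of the theorem. Saying that ``the delicate step is to bound this error precisely and verify that it is dominated by the main term at every iteration'' is not a proof of that step: when triangles are present, the per-round retention probabilities for a color $c$ at a vertex $v$ are no longer governed by a product over independent (non-adjacent) neighbors, the lists of adjacent vertices become correlated across rounds, and one must track a second parameter (the number of edges surviving inside each neighborhood, controlled by $y$) alongside list sizes and color-degrees, proving concentration for all of them simultaneously. Carrying this out with the correct target $k = Cd/\ln(d^2/y)$ is genuinely hard --- it is essentially the content of Vu's extension \cite{vu} and of later work on locally sparse graphs --- and none of the required estimates appear in your sketch. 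As written, the argument is a plan, not a proof; the assertion that ``$1 \leq y \leq d^2/2$ is exactly what ensures the recursion closes'' is where the theorem actually lives.

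It is also worth noting that the proof of \cite{aks} (summarized in this paper's sketch of Theorem~\ref{aks-thm2}) avoids this difficulty entirely by a much simpler reduction. Set $f = d^2/y$ and randomly partition $V$ into about $\sqrt{y}$ classes; the Lov\'{a}sz Local Lemma guarantees a partition in which every class induces a graph of maximum degree $O(d/\sqrt{y}) = O(\sqrt{f})$ whose neighborhoods span only $O(1)$ edges, hence (after deleting a negligible set of edges) each class is triangle-free. Applying Theorem~\ref{triangle-free-d} as a black box to each class uses $O(\sqrt{f}/\log f)$ colors per class, for a total of $\sqrt{y} \cdot O(\sqrt{f}/\log f) = O(d/\log(d^2/y))$ colors. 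This route requires no modification of the nibble analysis at all, which is precisely what makes it tractable. If you want to pursue the direct semi-random approach, you would need to supply the full concentration argument for the triangle-corrected process; otherwise, the partition-and-reduce argument is the one to write down.
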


Thus, for instance, if $y < d^{2 - \Omega(1)}$, then the worst-case behavior of $\chi(G)$ is roughly the same as if $G$ had no triangles at all.  The result of \cite{aks} has been used in dozens of combinatorial constructions.  Roughly speaking, if $G$ is produced in a somewhat ``random'' or ``generic'' way, then $G$ will have relatively few triangles; only a few extremal cases (such as a graph containing a $d$-clique) give the full triangle count. 

In this paper, we show upper bounds for $\chi(G)$ as a function of the number of vertices $n$,  the number of edges $m$, as well as the triangle count. Our results smoothly interpolate between the generic bounds true for all graphs, and the triangle-free bounds of Theorems~\ref{triangle-free-thm1}. We will see here as well that graphs with few triangles have nearly the same behavior as graphs with no triangles. This is convenient because triangle counts (local and global) can be computed easily in polynomial time; this is quite different from computing $\chi(G)$ directly (which is NP-hard).

\begin{definition}
We say that a vertex $v \in V$ has \emph{local triangle count} $y$, if there are $y$ pairs of vertices $u, w \in V$ such that $(v,u), (v,w), (u,w) \in E$. We say that $G$ has \emph{local triangle bound} $y$ if every vertex has local triangle count at most $y$.
\end{definition}

We shall show the following bounds on the chromatic number:
\begin{theorem}
Suppose a graph $G$ has $m$ edges, $n$ vertices, $t$ triangles, and local triangle bound $y$. Then
$$
\chi(G) \leq \min(a_1, a_2, a_3, a_4, a_5, a_6)
$$
where
\begin{align*}
&a_1 = O \bigl( \sqrt{ \frac{n}{\log n}}  + \frac{n^{1/3} y^{1/3}}{\log^{2/3} (n^2/y)} \bigr) && a_2 = O \bigl( \frac{m^{1/3}}{\log^{2/3} m} + \frac{m^{1/4} y^{1/4}}{\log^{3/4} (m/y)} \bigr) \\
&a_3 = O \bigl( \sqrt{\frac{n}{\log n}} + \frac{t^{1/3} \log \log(t^2/y^3)}{\log^{2/3}(t^2/y^3)} \bigr) && a_4 = O \bigl( \frac{m^{1/3}}{\log^{2/3} m} + \frac{t^{1/3} \log \log(t^2/y^3)}{\log^{2/3}(t^2/y^3)} \bigr) \\
&a_5 = O(\sqrt{\frac{n}{\log n}}) + (6^{1/3} + o(1)) t^{1/3} && a_6= O(\frac{m^{1/3}}{\log^{2/3} m}) + (6^{1/3} + o(1)) t^{1/3} 
\end{align*}
\end{theorem}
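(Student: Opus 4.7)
All six bounds have the same shape: a ``triangle-free'' main term coming from Theorem~\ref{triangle-free-thm1} or Theorem~\ref{triangle-free-thm2}, plus a correction that pays for the triangles. The unifying strategy is to partition $V=V_0\sqcup V_1$ so that $G[V_0]$ is triangle-free (letting me apply the triangle-free bounds directly) while $V_1$ is a small or especially structured ``triangle-heavy'' set that I color separately on a disjoint palette. In some regimes $V_1$ will be colored trivially with $|V_1|$ fresh colors, and in others I will invoke the AKS bound of Theorem~\ref{aks-thm} on the subgraph where the maximum degree is controlled. Since the palettes are disjoint, the two chromatic numbers add, and the task reduces to choosing the partition so that both terms are small.

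For $a_5$ and $a_6$, I would take $V_1$ to be a vertex triangle cover of $G$. The key combinatorial input is the estimate
\[
\tau_\triangle(G)\le (6^{1/3}+o(1))\,t^{1/3},
\]
which is tight on $K_k$, where $t=\binom{k}{3}$ and the minimum cover has size $k-2$. Granted this, Theorems~\ref{triangle-free-thm1} and~\ref{triangle-free-thm2} applied to $G-V_1$ yield $a_5$ and $a_6$. For $a_1$ and $a_2$ I instead use a maximum-degree split: fix a threshold $D$, put the high-degree vertices in $V_1$ (with $|V_1|\le n$ in the $n$-case and $|V_1|\le 2m/D$ in the $m$-case), and apply Theorem~\ref{aks-thm} on $V_0$ to obtain $O(D/\log(D^2/y))$ colors. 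Balancing $D$ against $|V_1|$ produces the correction terms $n^{1/3}y^{1/3}/\log^{2/3}(n^2/y)$ and $m^{1/4}y^{1/4}/\log^{3/4}(m/y)$; the change in exponents traces directly to the differing bounds on $|V_1|$. The mixed bounds $a_3$ and $a_4$ replace the crude estimate $3t\le ny$ by the actual global triangle count $t$. I would do this by iterating the AKS-based peel, each round refining the local triangle bound on the remaining vertices using the budget supplied by $t$; the $\log\log(t^2/y^3)$ factor in the numerator is the characteristic $O(\log\log)$ loss from such recursive schemes.

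The hard part is the vertex triangle cover estimate. The matching example is the clique, so obtaining the sharp constant $6^{1/3}$ is a Kruskal--Katona--type extremal question on the $3$-uniform hypergraph of triangles; this is precisely the open problem of Johnson (2009) flagged in the abstract. My plan is therefore to prove the cover inequality with whatever constant is currently available unconditionally (e.g.\ via a greedy removal of the vertex of largest triangle-degree, which yields a $O(t^{1/3})$ bound with a larger constant), and to indicate that a resolution of Johnson's conjecture would tighten the leading constant in $a_5,a_6$ to $6^{1/3}$. All of the coloring ingredients---the AKS algorithm, Molloy's constructive proof of the Johansson bound, and the Gimbel--Thomassen / Ajtai coloring, together with a greedy construction of the triangle cover---admit randomized polynomial-time implementations, so their composition delivers the constructive guarantee in the theorem.
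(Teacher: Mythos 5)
The central step in your plan for $a_5$ and $a_6$ --- the claim that every graph with $t$ triangles has a set of at most $(6^{1/3}+o(1))\,t^{1/3}$ vertices meeting every triangle --- is false. A disjoint union of $t$ vertex-disjoint triangles requires a cover of size exactly $t$, and greedy removal by triangle-degree gains nothing there since every vertex lies in exactly one triangle. The clique $K_k$ is extremal in the \emph{opposite} direction: it minimizes cover size per triangle rather than maximizing it, so no cover-based split can work. The paper instead proves Proposition~\ref{tw-prop1} by induction on the potential $100\sqrt{n}+(6t)^{1/3}$: either some vertex has degree below this threshold (delete it and extend the coloring greedily), or Tur\'{a}n's theorem applied to the neighborhood of a vertex of \emph{minimum} triangle-degree $y$ yields an independent set of size about $d^2/(d+2y)$ whose removal destroys enough triangles (each of its vertices lies in at least $y$ triangles) to pay for the one new color; the constant $6^{1/3}$ emerges from the derivative of $(6t)^{1/3}$, entirely unconditionally. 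You have also misplaced Johnson's open problem: in the paper it concerns the ratio of fractional chromatic number to Hall ratio and is invoked only to (conjecturally) remove the $\log\log$ factor from $a_3,a_4$; it plays no role in the constant $6^{1/3}$.

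The degree-split plan for $a_1,a_2$ also fails as stated. For $a_1$ there is no nontrivial bound on the number of vertices of degree greater than $D$ in terms of $n$ alone, and coloring each such vertex with a fresh color is useless. The proof of Theorem~\ref{prop0} is instead an \emph{iterative} peeling: while some vertex has degree exceeding $d$, apply Tur\'{a}n's theorem to its neighborhood (which spans at most $y$ edges) to extract an independent set of size at least $d/(y/d+1)$, give it one new color, and repeat; the number of rounds is at most $n(y/d+1)/d$, and balancing against the AKS bound on the residual low-degree graph gives $a_1$. For $a_2$, coloring the at most $2m/D$ high-degree vertices trivially only yields a $\sqrt{m}$-type bound after balancing, not $m^{1/4}y^{1/4}$; the paper applies the already-established $n$-based Theorem~\ref{prop0} to the high-degree part. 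Finally, for $a_3,a_4$ your ``iterate and lose $\log\log$'' sketch omits the actual mechanism: bucketing vertices by local triangle count into sets $A_i$, removing independent sets from neighborhoods that are too dense toward higher buckets (each removal destroying $\Omega(d^2)$ triangles, so only $O(t/d^2)$ such colors are needed), and a layered list-coloring argument (Lemma~\ref{ff-lemma}, built on Vu's list-coloring theorem) which is where the $\log\log(t^2/y^3)$ factor genuinely arises. As written, none of the six bounds is established.
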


We show in Section~\ref{lb-sec} that the bounds $a_1, a_2$ are tight up to constant factors, and the bounds $a_5, a_6$ are tight up to second-order terms. The bounds $a_3, a_4$ are tight up to factors of $\log \log (t^2/y^3)$; we will show in Section~\ref{conj-sec} that this small gap can be resolved via a conjecture on fractional chromatic number and degeneracy in triangle-free graphs.

We also remark that all our bounds on chromatic number can be realized constructively; that is, there are randomized algorithms which construct such colorings in expected polynomial time. We do not discuss any further algorithmic issues in this paper.

\subsection{Notation and preliminaries}
We write $f \la g$ to mean that $f = O(g)$. We write $f \approx g$ if $f = \Theta(g)$.  Given a graph $G$ on a vertex set $V$ and a subset $B \subseteq V$, we use $\chi(B)$ as a shorthand for $\chi(G[B])$, where $G[B]$ denotes the subgraph induced on $B$.  We let $N(v)$ denote the neighborhood of $v$, i.e. the set of vertices $w$ with $(w,v) \in E$. The degree of $v$ is $\text{deg}(v) = |N(v)|$.

We use the expression $\log$ to denote the \emph{truncated logarithm}, i.e.
$$
\log(x) = \begin{cases}
\ln(x) & x > e \\
1 & x \leq e
\end{cases}
$$

Note that, due to this definition, iterated logarithms are always well-defined. In addition, we have the following useful bound:
\begin{observation}
  \label{obs1}
  Suppose $x_0 \leq x \leq x_1$. Then for $a \leq 1$ and $y \geq 0$ we have
  $$
  x_0 \log^a(y/x_0) \leq x \log^a(y/x) \leq x_1 \log^a(y/x_1)
  $$
\end{observation}
\begin{proof}
  We claim that $x \log^a(y/x)$ is an increasing function of $x$. When $x \geq y/e$, we have $x \log^a(y/x) = x$ and so this is clearly true. Otherwise, we have $x \log^a(y/x) = x (\ln(y/x))^a$, which has derivative $(\ln(y/x))^{a-1} (\ln(y/x) - a)$; this is positive for $y > e x$ and $a \leq 1$.
  \end{proof}
  
Our constructions will make use of a few previous results for coloring and independent sets, which we summarize here. We have already mentioned the theorem of \cite{aks} for chromatic number of locally-sparse graphs as a function of maximum degree. We restate it here, taking advantage of our truncated logarithm and asymptote notations:
\begin{theorem}[\cite{aks}]
\label{aks-thm2}
Suppose $G$ has maximum degree $d$ and local triangle bound $y$. Then $\chi(G) \la \frac{d}{\log(d^2/y)}$.
\end{theorem}

Vu \cite{vu} extended Theorem~\ref{aks-thm} to list-chromatic number:
\begin{theorem}[\cite{vu}]
\label{aks-thm3}
Suppose $G$ has maximum degree $d$ and local triangle bound $y$. If each vertex has a palette of size $\frac{c d}{\log(d^2/y)}$, where $c$ is a universal constant, then $G$ can be list-colored.
\end{theorem}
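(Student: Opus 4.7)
The plan is to mirror the proof sketch already given for Theorem~\ref{aks-thm2}, substituting a constructive list-coloring analogue of Johansson's theorem for the triangle-free coloring algorithm. Vu's original argument proceeds in two stages: first, use the Lov\'asz Local Lemma to find a palette sparsification (or partial list-assignment) that reduces the instance to something behaving like a triangle-free list-coloring problem of controlled maximum degree; second, invoke a list-coloring version of Johansson's theorem on the remaining instance. Both stages admit direct constructive replacements.

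For the first stage, one replaces the nonconstructive LLL with the Moser--Tardos algorithm \cite{mt}. The bad events in Vu's argument---for instance, ``vertex $v$ retains too few colors in its sparsified list'' or ``two adjacent vertices share too many sparsified colors,'' and analogous control on triangle contributions---depend only on bounded-radius collections of independent random palette choices. Each event therefore fits the variable-based Moser--Tardos framework, and the dependency graph has degree polynomial in $d$, so resampling converges in randomized polynomial time.

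For the second stage, one needs a constructive algorithm producing a list-coloring of a triangle-free graph of maximum degree $d$ whenever every list has size $\Omega(d/\log d)$. Constructive versions of Johansson's list-coloring theorem are available (the algorithm of \cite{pettie} and its subsequent refinements), playing exactly the same role here as the constructive triangle-free coloring algorithm played in the proof of Theorem~\ref{aks-thm2}. Composing the two stages yields a list-coloring that meets the palette size $cd/\log(d^2/y)$ claimed in the statement.

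The main (and essentially only) obstacle is checking that every LLL event used by Vu can be expressed as a function of finitely many independent random variables, so that Moser--Tardos applies directly, and that the sparsified instance handed to the triangle-free list-coloring subroutine indeed satisfies the hypotheses of the constructive list-Johansson algorithm. Both points are routine bookkeeping given that Vu's sparsification is a single round of independent per-vertex random choices; no genuinely new probabilistic ideas are required beyond those already in \cite{vu}, \cite{pettie}, and \cite{mt}.
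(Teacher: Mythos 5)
Your proposal is correct and follows essentially the same route the paper takes: the paper gives no separate proof for this theorem (it simply cites Vu), relying implicitly on the same constructivization template it spells out for Theorem~\ref{aks-thm2}, namely replacing each Lov\'asz Local Lemma application by Moser--Tardos and the triangle-free coloring ingredient by the constructive algorithm of \cite{pettie}. One small caveat: Vu's argument is an iterative semi-random (nibble-style) procedure with an LLL application in each round, rather than a single sparsification round followed by a list-Johansson call, but since each round's randomness consists of independent per-vertex color choices, your key point---that all the bad events fit the variable-based Moser--Tardos framework---still goes through.
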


We also use a reformulated version of Tur\'{a}n's Theorem (the original version is the equivalent statement for the complement graph.)
\begin{theorem}[Reformulated Tur\'{a}n's Theorem]
\label{turan-thm}
Suppose $G$ is a graph with $n$ vertices and $m$ edges. Then $G$ has an independent set of size at least $\frac{n}{2m/n + 1}$.
\end{theorem}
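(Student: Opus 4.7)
The plan is to apply the classical Caro--Wei probabilistic argument and then derandomize it. I would first take a uniformly random linear order $\pi$ on $V$ and define
$$
I = \{v \in V : \pi(v) < \pi(u) \text{ for every neighbor } u \text{ of } v\}.
$$
This is automatically an independent set, since on any edge at most one endpoint can be the $\pi$-minimum of its pair. Each $v$ lies in $I$ iff it precedes all its $d_v = \deg(v)$ neighbors, which happens with probability $\frac{1}{d_v+1}$, giving $\bE[|I|] = \sum_v \frac{1}{d_v+1}$.

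Next, I would apply Jensen's inequality to the convex function $x \mapsto \frac{1}{x+1}$, obtaining $\sum_v \frac{1}{d_v+1} \geq \frac{n}{\bar{d}+1}$, where $\bar{d} = \frac{2m}{n}$ is the average degree. Hence some ordering witnesses an independent set of size at least $\frac{n}{2m/n + 1}$.

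For the constructive claim, I would derandomize via the method of conditional expectations: sequentially fix the positions of vertices in $\pi$, always selecting the value that keeps $\bE[|I| \mid \text{partial order}]$ from decreasing. Each conditional expectation is a sum of simple combinatorial fractions and is clearly polynomial-time computable. Equivalently, the min-degree greedy procedure (repeatedly extract a minimum-degree vertex along with its neighborhood, adding it to $I$) produces an independent set of size at least $\sum_v \frac{1}{d_v+1}$; this follows by a one-line induction, using the fact that if $v$ is a minimum-degree vertex then $\frac{1}{d_v+1} + \sum_{u \in N(v)} \frac{1}{d_u+1} \leq 1$. The only mild obstacle is packaging the derandomization, which is routine because $|I|$ decomposes cleanly into indicator variables; no deeper ideas are required, since this is essentially a classical result.
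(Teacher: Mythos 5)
Your proposal is correct. The paper states this result without proof, citing it as classical (the Tur\'{a}n/Caro--Wei bound), so there is no in-paper argument to compare against; your route --- the random-permutation argument giving $\bE[|I|] = \sum_v \frac{1}{d_v+1}$, convexity to get $\sum_v \frac{1}{d_v+1} \geq \frac{n}{2m/n+1}$, and the min-degree greedy procedure (or conditional expectations) for the constructive claim --- is exactly the standard way to establish both the bound and the polynomial-time constructibility, and all the steps you give, including the inequality $\frac{1}{d_v+1} + \sum_{u \in N(v)} \frac{1}{d_u+1} \leq 1$ for a minimum-degree vertex $v$, check out.
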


\section{Bounds in terms of $n$}
We begin by showing bounds on $\chi(G)$ as a function of triangle information (local and global) as well the vertex count $n$. These generalize the result of \cite{ajtai} for triangle-free graphs. Notably, when $G$ has a relatively small number of triangles, we show that $\chi(G)$ has approximately the same worst-case behavior as if $G$ had no triangles at all; namely, we have the bound $\chi(G) \la \sqrt{\frac{n}{\log n}}$.

\begin{theorem}
\label{prop0}
Suppose $G$ has $n$ vertices and local triangle bound $y$. Then
$$
\chi(G) \la \sqrt{ \frac{n}{\log n}}  + \frac{n^{1/3} y^{1/3}}{\log^{2/3} (n^2/y)}
$$
\end{theorem}
\begin{proof}
  Let $f = \log(n^2/y)$
  Our plan is to repeatedly remove independent sets from $G$, as long as its maximum degree exceeds a a parameter $d$.

  Initially let $G_0 = G$. Now, repeat the following for $i = 0, 1, 2, \dots, k$: if all the vertices of $G_i$ have degree at most $d$, then abort the process and color the residual graph $G_i$ using Theorem~\ref{aks-thm2}; this requires $O( \frac{d}{\log(d^2/y)})$ colors.  Otherwise, select some vertex $v_i$ which has degree $> d$ in $G_i$. Thus $G[N(v_i)]$ has at least $d+1$ vertices and has at most $y$ edges. By Theorem~\ref{turan-thm}, there is an independent set $I_i \subseteq N(v_i)$ of size  at least $\frac{d}{2y/d + 1}$. We color all the vertices of $I_i$ with a new color, and let $G_{i+1} = G_i - I_i$. Overall, this process uses $O( \frac{d}{\log (d^2/y)} + k)$ colors.

Since the sets $I_1, \dots, I_k$ are all disjoint, we have $k \leq \frac{n (2y/d + 1)}{d} = \frac{2 n y}{d^2} + \frac{n}{d}$ and so
$$
\chi(G) \la \frac{d}{\log(d^2/y)} + \frac{n y}{d^2} + \frac{n}{d}
$$

When $y \leq \sqrt{n \log n}$, we set $d = \sqrt{n \log n}$ and get
$$
\frac{d}{\log (d^2/y)} + \frac{n y}{d^2} + \frac{n}{d} \leq \frac{\sqrt{n \log n}}{\log(\sqrt{n \log n})} + \frac{ 2 n}{\sqrt{n \log n}} + \frac{n}{\sqrt{n \log n}} \la \sqrt{n/\log n}
$$

When $y > \sqrt{n \log n}$, we set $d = (n y f)^{1/3}$. We estimate the $\log(d^2/y)$ term as:
$$
\log(d^2/y) = \log(n^{2/3} f^{2/3} y^{-1/3}) = \log( (n^2/y)^{1/3} \log^{2/3} (n^2/y)) \approx \log(n^2/y) = f
$$
and so
$$
\frac{d}{\log (d^2/y)} + \frac{n y}{d^2} + \frac{n}{d} \la \frac{d}{f} + \frac{n^{1/3} y^{1/3}}{f^{2/3}} + \frac{n^{2/3}}{(y \log(n^2/y))^{1/3}}
$$

As $y > \sqrt{n \log n}$, Observation~\ref{obs1} gives
$$
\frac{n^{2/3}}{(y \log(n^2/y))^{1/3}} \leq \frac{n^{2/3}}{(\sqrt{n \log n} \log( \frac{n^2}{\sqrt{n \log n}}))^{1/3}} = \sqrt{\frac{n}{\log n}},
$$
which completes the proof.
\end{proof}

Next, we will show bounds on $\chi(G)$ given information both about the global triangle count and the local triangle bound. We begin with a useful lemma.
\begin{lemma}
\label{ff-lemma}
Suppose that $G$ has local triangle bound $y$, and there is a partition of the vertices $V = A_1 \sqcup A_2 \sqcup \dots \sqcup A_k$ such that, for all $1 \leq i \leq j \leq k$, every $v \in A_i$ satisfies $|N(v) \cap A_j| \leq d x^{i - j}$ for some parameters $d, x \geq 1$. Then 
$$
\chi(G) \la \frac{d (1 +\frac{\log \log(d^2/y)}{\log x})}{\log(d^2/y)}
$$

In particular, if $x$ is a constant, then
$$
\chi(G) \la \frac{d \log \log(d^2/y)}{\log(d^2/y)}
$$
\end{lemma}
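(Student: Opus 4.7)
The plan is to color the parts $A_i$ using $\ell+1$ pairwise disjoint shared palettes (where $\ell = \lceil \log_x \log(d^2/y) \rceil$), assigning palette $P_{i \bmod (\ell+1)}$ to part $A_i$. Each palette will have size $O(d/\log(d^2/y))$, so the total number of colors will be $(\ell+1) \cdot O(d/\log(d^2/y))$, matching the stated bound. The individual parts can be list-colored using Theorem~\ref{aks-thm3}, since $G[A_i]$ has maximum degree $\leq d$ (by taking $j=i$ in the hypothesis) and local triangle bound $\leq y$; the disjointness of palettes is what lets me reuse colors without bookkeeping for most pairs of parts.

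To make this work, I would process the parts in reverse order $A_k, A_{k-1}, \ldots, A_1$. When it is time to color $A_i$, the parts $A_{i+1}, \ldots, A_k$ are already colored, and for each $v \in A_i$ I define an effective palette $Q_v$ obtained from $P_{i \bmod (\ell+1)}$ by removing the colors already assigned to neighbors of $v$. Because the palettes are disjoint, the only already-colored neighbors that contribute forbidden colors are those lying in parts $A_j$ with $j > i$ and $j \equiv i \pmod{\ell+1}$, namely $j \in \{i + (\ell+1), i + 2(\ell+1), \ldots\}$. The hypothesis bounds the total count of such neighbors by
\[
\sum_{r \geq 1} |N(v) \cap A_{i + r(\ell+1)}| \;\leq\; \sum_{r \geq 1} \frac{d}{x^{r(\ell+1)}} \;\leq\; \frac{d}{x^{\ell+1} - 1}.
\]
By the choice of $\ell$ we have $x^{\ell+1} \geq x \log(d^2/y) \geq \log(d^2/y)$, so this count is $O(d/\log(d^2/y))$. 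Choosing each palette of size $(c+C_0) \cdot d/\log(d^2/y)$ for a suitable large constant $C_0$ (with $c$ the constant from Theorem~\ref{aks-thm3}), the effective palette $Q_v$ still has size $\geq c \cdot d/\log(d^2/y)$, so Theorem~\ref{aks-thm3} applies and list-colors $G[A_i]$ in expected polynomial time.

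The main obstacle I anticipate is the calculation in the previous paragraph: verifying $x^{\ell+1} \gtrsim \log(d^2/y)$ with the ceiling in $\ell$, and ensuring the geometric series genuinely converges, which requires handling the degenerate regime $x$ close to $1$ or $\log(d^2/y)$ very small. In that regime the trivial coloring using $d+1$ colors already satisfies the stated bound, so these edge cases can be dispatched separately. The consequence for $x \geq 1 + \Omega(1)$ follows from $\log_x t = \Theta(\log t)$ when $\log x = \Theta(1)$, so $\lceil \log_x \log(d^2/y) \rceil = O(\log \log(d^2/y))$.
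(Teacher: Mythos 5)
Your proof is correct and is essentially the paper's argument: the paper likewise list-colors the parts in reverse order via Theorem~\ref{aks-thm3}, reuses colors only between parts whose indices differ by a multiple of $s = \lceil \log_x \log(d^2/y)\rceil$ (phrased there as splitting $V$ into groups $B_i = A_i \cup A_{i+s} \cup \cdots$ with disjoint color sets, which is the same as your palettes indexed mod $\ell+1$), and bounds the forbidden colors by the same geometric series $\la d/\log(d^2/y)$. The edge case you flag (small $\log(d^2/y)$ or $x$ near $1$) is glossed over in the paper as well, so your treatment is no less complete.
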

\begin{proof}
  Define $f = \log(d^2/y) \geq 1$. We will first show that this result holds for $x \geq 2 f$. In this case, we have $\log \log(d^2/y)/\log x \leq 1$ and so we need to show that $\chi(G) \la d/f$.

  We will allocate a palette of size $c d/f$ to each vertex $v$ for some constant $c$ to be determined. We proceed for $j = k, k-1, \dots, 1$, attempting to list-color $G[A_j]$; at stage $j$, we remove from the palette of each vertex $v \in A_j$ all the colors already used by its neighbors in $A_{j+1}, \dots, A_k$. As $f \geq 1$, the total number of such neighbors is at most
$$
  d x^{-1} + d x^{-2} + d x^{-3} + \dots + d x^{-k+j} \leq d (\frac{1}{2f} + \frac{1}{(2f)^2} + \frac{1}{(2f)^3} + \dots) \leq d/f
$$

Thus, every vertex $v \in A_j$ has a residual palette size of at least $c d / f - d/f = (c - 1) d/f$. If $c$ is a sufficiently large constant, then Theorem~\ref{aks-thm3} allows us to color $A_j$ with these palettes. So each set $A_k, A_{k-1}, \dots, A_1$ gets successfully colored in turn.

We next consider the case that $x < 2 f$. Let us set $s = \lceil \frac{\log(2 f)}{\log x} \rceil$ and for each $i = 1, \dots, s$ define $B_i = A_i \cup A_{i+s} \cup A_{i+2 s} \cup \dots$. Observe that each $G[B_i]$ satisfies the hypotheses of this lemma with the vertex-partition $A_i, A_{i+s}, A_{i+2 s}, \dots$ and with parameters $d' = d, x' = x^s$. Since $x' \geq 2 f$, we have already proved that the lemma holds in the situation and so
$$
\chi(B_i) \la \frac{d' (1 + \frac{\log \log((d')^2/y)}{\log(x')})}{\log( (d')^2/y)} \approx \frac{d}{f}
$$

So
$$
\chi(G) \leq \chi(B_1) + \dots + \chi(B_s) \la s d/f \leq \frac{(1 + \frac{\log(2 f)}{\log x}) d}{f} \la \frac{d (1 + \frac{\log f}{\log x})}{f}
$$
\end{proof}

\begin{theorem}
\label{ttprop2}
Suppose $G$ has $n$ vertices, $t$ triangles, and local triangle bound $y$. Then
$$
\chi(G) \la  \sqrt{ \frac{n}{\log n}} + \frac{t^{1/3} \log \log (t^2/y^3)}{\log^{2/3}(t^2/y^3)}
$$
\end{theorem}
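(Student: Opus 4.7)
My plan is to parallel the proof of Theorem~\ref{prop0}, using Lemma~\ref{ff-lemma} on the residual graph in place of Theorem~\ref{aks-thm2}. I choose the parameter $d = \Theta((t f)^{1/3})$, where $f = \log(t^2/y^3)$, so that $d\,\log\log(d^2/y)/\log(d^2/y) = \Theta\bigl(t^{1/3}\log\log(t^2/y^3)/\log^{2/3}(t^2/y^3)\bigr)$, matching the desired second term. As in Theorem~\ref{prop0}, one first disposes of trivial regimes (e.g.\ when $t \leq y$, or when the target bound is already dominated by $\sqrt{n/\log n}$, or when $d \geq y$ is not satisfied so a different $d$ is needed), so that we may assume we are in the regime where the choice $d=(tf)^{1/3}$ is meaningful.

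I then reduce the maximum degree of $G$ to at most $d$ by the same degree-reduction procedure used in Theorem~\ref{prop0}: repeatedly pick a vertex $v$ with degree exceeding $d$, apply Tur\'{a}n's theorem to $N(v)$ (which has at most $y$ internal edges) to extract an independent set $I$ of size $\ga d^2/(y+d)$, assign $I$ a fresh color, and delete $I$. The total number of fresh colors spent in this phase is $O(n(y+d)/d^2) = O(ny/d^2 + n/d)$; I would verify by a case split (on whether $y\le d$ or $y>d$, mirroring the case analysis in Theorem~\ref{prop0}) that this quantity is $O(\sqrt{n/\log n})$ for our $d$, so that the whole degree-reduction cost is absorbed into the first term of the target bound.

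It remains to color the residual graph $G'$ (with max degree $\le d$, local triangle bound $y$, and at most $t$ triangles) using $O\bigl(d\,\log\log(d^2/y)/\log(d^2/y)\bigr)$ colors. For this I would invoke Lemma~\ref{ff-lemma}, which needs a partition $V(G') = A_1 \sqcup \dots \sqcup A_k$ with parameters $d$ and $x = 1+\Omega(1)$ satisfying $|N(v)\cap A_j|\le d\, x^{i-j}$ for every $v\in A_i$ and every $j\ge i$. Constructing this partition is the main obstacle: the hypothesis demands exponential decay of forward neighborhoods, which is stronger than what a standard degeneracy ordering provides, and moreover the construction must genuinely use the global parameter $t$ (not merely $n$ and $y$), otherwise one recovers only the bound of Theorem~\ref{prop0}.

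A natural attempt is to bucket vertices by their local triangle count in $G'$: let $L_i = \{v : y_v \in [y\cdot 2^{-i}, y\cdot 2^{-i+1})\}$, so that the identity $\sum_v y_v \le 3t$ gives $|L_i| \la t\cdot 2^i/y$, thereby bringing $t$ into the accounting of class sizes. One would then order these layers (possibly refining within each layer by a peeling ordering on residual degree) so that forward neighborhoods lie mostly in lower-triangle layers, which contain few triangle-rich vertices, and extract the needed decay from this. Should this deterministic construction not directly give the clean decay, an alternative is a randomized assignment in which each $v$ is placed into class $A_i$ with probability $\propto x^{-i}$, using the Lov\'{a}sz Local Lemma (made constructive by Moser--Tardos) to argue a valid partition exists with the required bounds holding for every vertex. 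Combining the $O(\sqrt{n/\log n})$ colors from the degree-reduction phase with the output of Lemma~\ref{ff-lemma} on the residual then yields $\chi(G) \la \sqrt{n/\log n} + t^{1/3}\log\log(t^2/y^3)/\log^{2/3}(t^2/y^3)$, as claimed.
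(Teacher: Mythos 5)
There is a genuine gap, in two places, and both are filled by the same missing idea: the Tur\'{a}n extractions must be charged against the \emph{global triangle count} $t$, not against the vertex count $n$, and these same extractions are what \emph{enforce} the hypothesis of Lemma~\ref{ff-lemma}.

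First, your degree-reduction accounting fails. With $d = (tf)^{1/3}$ the vertex-charging bound $O(n(y+d)/d^2)$ need not be $O(\sqrt{n/\log n})$: take $y=1$ and $t=n$, so the target is $\approx \sqrt{n/\log n}$, but $n/d \approx n^{2/3}/\log^{1/3} n$; or take $y=n$, $t=n^{3/2}$, where $ny/d^2 \approx n/f^{2/3}$ swamps both terms of the target. The point is that $d=(tf)^{1/3}$ is calibrated so that the phase is cheap when measured in \emph{triangles destroyed}, not in vertices removed. Accordingly, the paper does not reduce degrees globally; it buckets vertices into layers $A_i$ by local triangle count ($2^i$ to $2^{i+1}$ triangles) and, whenever some $v\in A_i$ with $i \ge \log_2 d$ has more than $2^{(i-j)/2}d$ neighbors in a layer $A_j$ with $j\ge i$, extracts via Tur\'{a}n an independent set $I\subseteq N(v)\cap A_j$ of size $\ga 2^{-j}d^2$. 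Every vertex of $I$ carries at least $2^j$ triangles, and these are pairwise distinct since $I$ is independent, so each such step destroys $\Omega(d^2)$ triangles and the phase costs only $O(t/d^2) = O(t^{1/3}/f^{2/3})$ colors.

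Second, the partition for Lemma~\ref{ff-lemma} is exactly where you punt, and neither of your fallbacks works. Bucketing by triangle count gives bounds on $|A_j|$ but says nothing about $|N(v)\cap A_j|$; a vertex genuinely can have $\Theta(d)$ triangle-rich neighbors, and no reordering of the layers, nor a random reassignment of vertices to layers (which ignores $t$ entirely), can manufacture the required exponential decay. The decay is not discovered, it is \emph{created}: once the cleaning loop above terminates, every surviving $v\in A_i$ has at most $d(\sqrt{2})^{i-j}$ neighbors in each $A_j$ with $j\ge i$, which is verbatim the hypothesis of Lemma~\ref{ff-lemma} with $x=\sqrt2$. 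Two further pieces you omit are also needed: the low layers $A_1\cup\dots\cup A_{\log_2 d}$ have local triangle bound $O(d)$ and are colored by Theorem~\ref{prop0} (contributing the term $(nd)^{1/3}/\log^{2/3}(n^2/d)$, which must be shown to be dominated), and the argument as described only yields $\sqrt{n}$ rather than $\sqrt{n/\log n}$; the paper recovers the latter by a final split at triangle count $z=t/n$, applying the $\sqrt{n}$-bound to the few high-triangle vertices and Theorem~\ref{prop0} with local bound $z$ to the rest.
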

\begin{proof}
Let $A_i$ denote the set of vertices in $G$ with between $2^i$ and $2^{i+1}$ triangles and let $d$ be a parameter which we will specify shortly. We let $A_{-1}$ denote the vertices incident on zero triangles. We also set $r = \lceil \log_2 d \rceil, f = \log(t^2/y^3)$, and $\ell = \lceil \log_2 y \rceil$.

We will color $G$ in three stages. In the first stage, whenever there is some vertex $v \in A_i$ such that $v$ has more than $2^{(i-j)/2} d$ neighbors in $A_j$ where $r \leq i \leq j$, we apply Theorem~\ref{turan-thm} to obtain an independent set $I \subseteq N(v) \cap A_j$.  We assign each vertex in $I$ to one new color and remove $I$ from the graph. Note that this process of removing vertices may cause the membership of the sets $A_{k}$ to change.

The graph $G[N(v) \cap A_j]$ has at least $2^{(i-j)/2} d$ vertices. By definition of $A_j$, it has at most $2^{j+1}$ edges.  Thus, the set $I$ has size
$$
|I| \geq \frac{2^{(i-j)/2} d}{1 + \frac{2^{i+2}}{2^{(i-j)/2} d}}
$$

Observe that $2^{i+2} \geq 2^{r+2} \geq 4 d$. Since $i \leq j$, we therefore have $\frac{2^{i+2}}{2^{(i-j)/2} d} \geq 1$ and so
$$
|I| \ga \frac{2^{(i-j)/2} d}{\frac{2^{i+2}}{2^{(i-j)/2} d}} = 2^{-j} d^2 / 4
$$

By definition of $A_j$, each $w \in I$ is incident on at least $2^j$ triangles. Thus, when we remove $I$ from the graph, we remove at least $|I| 2^j  \ga d^2$ triangles from $G$. Consequently, the total number of colors used in the first phase is at most $O(t/d^2)$.

Next, suppose that the first phase has finished, and there no more vertices $v \in A_i$ which have more than $2^{(i-j)/2} d$ neighbors in $A_j$ for any $r \leq i \leq j$. The graph $G[A_{r} \cup \dots \cup A_{\ell}]$, with the corresponding partition $A_r, \dots, A_{\ell}$, satisfies the requirement of Lemma~\ref{ff-lemma} with parameter $x = \sqrt{2}$, so it can be colored using $O( \frac{d \log \log (d^2/y)}{\log (d^2/y)})$ colors.

Finally, we apply Theorem~\ref{prop0} to color $G[A_{-1} \cup A_0 \cup \dots \cup A_{r-1}]$. This graph has at most $2^{r} \la d$ triangles per vertex, hence this requires $O( \sqrt{\frac{n}{\log n}} + \frac{(n d)^{1/3}}{\log^{2/3}(n^2/d)})$ colors.

Putting all three terms together,
\begin{equation}
  \label{hg5}
\chi(G) \la  \frac{t}{d^2} +  \frac{d \log \log (d^2/y)}{\log (d^2/y)} + \Bigl( \sqrt{\frac{n}{\log n}} +  \frac{(n d)^{1/3}}{\log^{2/3}(n^2/d)} \Bigr)
\end{equation}

Now set $d = (f t)^{1/3} + \sqrt{n / \log n}$. We  observe the following useful bound:
$$
\log(d^2/y) \geq \log \Bigl( \frac{(f t)^{2/3}}{y} \Bigr)  = \log \Bigl( \log^{2/3} (t^2/y^3) ( t^{2/3}/y) \Bigr) \approx \log( t^2/y^3) = f
$$

Clearly $f \la \log n$. Also, since $t \leq n^3$, we have $d \la (n^3 \log n)^{1/3} + \sqrt{n / \log n}$ and  thus $\log(n^2/d) \ga \log n$. Using these bounds, we bound the summands of (\ref{hg5}) in turn:
\begin{align*}
  \frac{t}{d^2} &\leq \frac{t}{ (f t)^{2/3}} = \frac{t^{1/3}}{f^{2/3}} \\
\frac{d \log \log (d^2/y)}{\log(d^2/y)} &\la \frac{d \log f}{f} = \frac{t^{1/3} \log f}{f^{2/3}} + \frac{\sqrt{n} \log f}{\sqrt{\log n} f} \leq \frac{t^{1/3} \log f}{f^{2/3}} + \sqrt{n/\log n} \\
\frac{(n d)^{1/3}}{\log^{2/3}(n^2/d)} &\la \frac{\sqrt{n}}{\log^{5/6} n} + \frac{n^{1/3} t^{1/9} f^{1/9}}{\log^{2/3} n}
\end{align*}

All but one of these terms are clearly bounded by $O( \sqrt{ \frac{n}{\log n}} + \frac{t^{1/3} \log f}{f})$ as we have claimed. The one exception is the final term $\frac{n^{1/3} t^{1/9} f^{1/9}}{\log^{2/3} n}$. For this term,  we consider two cases. First, when $t \leq n^{3/2} \sqrt{\log n}$, we have
$$
\frac{n^{1/3} t^{1/9} f^{1/9}}{\log^{2/3} n} \leq \frac{n^{1/3} ( n^{3/2} \log^{1/2} n)^{1/9} (\log n)^{1/9}}{\log^{2/3} n} = \sqrt{n/\log n}
$$

When $t > n^{3/2} \sqrt{\log n}$, then \begin{align*}
  \frac{n^{1/3} t^{1/9} f^{1/9}}{\log^{2/3} n} &\leq \frac{t^{1/3}}{f^{2/3}} \times \frac{f^{7/9} n^{1/3}}{t^{2/9} \log^{2/3} n} \leq \frac{t^{1/3}}{f^{2/3}} \times \frac{(\log n)^{7/9} n^{1/3}}{t^{2/9} \log^{2/3} n} = \frac{t^{1/3}}{f^{2/3}} \times \frac{(\log n)^{1/9} n^{1/3}}{t^{2/9}} \\
  &\leq \frac{t^{1/3}}{f^{2/3}} \times \frac{(\log n)^{1/9} n^{1/3}}{(n^{3/2} \log^{1/2} n)^{2/9}} = \frac{t^{1/3}}{f^{2/3}}
\end{align*}
Thus, in either case, we have shown that
$$
\frac{n^{1/3} t^{1/9} f^{1/9}}{\log^{2/3} n} \la \sqrt{n/\log n} +  \frac{t^{1/3}}{f^{2/3}}
$$
completing the proof.
\end{proof}

\subsection{A result on independence number}
Recently, Bohman \& Mubayi \cite{bohman} have investigated the relation between independence number $\alpha(G)$ and the number of copies of $K_s$ in a graph $G$. In particular, for $s = 3$, they discuss extremal bounds relating $\alpha(G)$ and triangle count. As we show next, these results may be obtained as immediate corollaries of Theorem~\ref{prop0}. (Note that Theorem~\ref{prop0} requires the use of heavy-duty results of Johansson and \cite{aks}, whereas \cite{bohman} uses more elementary methods.)

\begin{corollary}[\cite{bohman}]
If $G$ has $n$ vertices and $t$ triangles, then
$$
\alpha(G) \ga \begin{cases}
\sqrt{n \log n} & \text{if $t \leq n^{3/2} \sqrt{\log n}$} \\
(n/t^{1/3}) \log^{2/3} (n/t^{1/3}) & \text{if $t \geq n^{3/2} \sqrt{\log n}$}
\end{cases}
$$
\end{corollary}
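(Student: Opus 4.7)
The plan is to obtain this as a direct corollary of Theorem~\ref{prop0} via the trivial bound $\alpha(G) \geq |V(G)|/\chi(G)$. The only subtlety is that Theorem~\ref{prop0} takes a \emph{local} triangle bound $y$ as input, whereas we are given only the \emph{global} count $t$; I would bridge this by a standard averaging step.

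Specifically, since each triangle contributes to the local triangle count of exactly three vertices, the local triangle counts sum to $3t$. By a Markov-type argument, at most $n/2$ vertices can have local triangle count exceeding $6t/n$, so there is a subset $V' \subseteq V$ with $|V'| \geq n/2$ in which every vertex has local triangle count at most $y := 6t/n$ (in $G$, hence also in $G[V']$). Applying Theorem~\ref{prop0} to $G[V']$ yields
\begin{equation*}
\chi(G[V']) \la \sqrt{\frac{n}{\log n}} + \frac{t^{1/3}}{\log^{2/3}(n^3/t)},
\end{equation*}
after simplifying $n^{1/3}(t/n)^{1/3} = t^{1/3}$ and $n^2/(t/n) = n^3/t$. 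Then $\alpha(G) \geq |V'|/\chi(G[V']) \ga n/\chi(G[V'])$.

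It remains to split into cases. A direct calculation shows the two terms in the denominator are comparable precisely at $t \approx n^{3/2}\sqrt{\log n}$. When $t \leq n^{3/2}\sqrt{\log n}$, the first term dominates, yielding $\alpha(G) \ga \sqrt{n \log n}$. When $t \geq n^{3/2}\sqrt{\log n}$, the second term dominates, and using $\log(n^3/t) = 3 \log(n/t^{1/3})$ yields $\alpha(G) \ga (n/t^{1/3}) \log^{2/3}(n/t^{1/3})$, matching the stated bound. For tightness, I would defer to the matching constructions given in \cite{bohman}.

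There is no substantive obstacle: the entire argument is a reduction to Theorem~\ref{prop0}. The only real work is the bookkeeping to verify the crossover threshold and confirm that the two regimes of the bound correspond exactly to which term dominates in the chromatic number estimate.
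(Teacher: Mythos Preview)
Your argument for the lower bound on $\alpha(G)$ is essentially identical to the paper's: both pass to the subset of vertices with local triangle count $O(t/n)$ via averaging, apply Theorem~\ref{prop0}, and use $\alpha \geq |V'|/\chi$. The only difference worth noting is in the tightness claim: the paper does not cite \cite{bohman} for the construction but instead uses its own Proposition~\ref{kim-thm2} (the blow-up of Kim's graph $H_k$) with carefully chosen parameters $k,i$ to produce the extremal examples directly, keeping the whole argument self-contained within the paper's framework. Your deferral to \cite{bohman} is of course valid, but it misses the paper's point that the same blow-up machinery used for the chromatic lower bounds in Section~\ref{lb-sec} already yields these independence-number upper bounds.
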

\begin{proof}
  Let $S$ denote the set of vertices which are incident upon at most $y = 10 t/n$ triangles.
  Applying Theorem~\ref{prop0} to $G[S]$, we have
  \begin{equation}
    \label{tgac1}
\chi(G[S]) \la \sqrt{ \frac{n}{\log n} } + \frac{t^{1/3}}{\log^{2/3} (n^3/t)}
\end{equation}

Since the average triangle count is $3 t/n$, we have $|S| \approx n$, and so $G[S]$ has an independent set of size at least $n/\chi(G[S])$; simple calculations show that the bound (\ref{tgac1}) achieves the claimed result.
\end{proof}

\section{Bounds in terms of $m$}
In this section, we show some bounds in terms of the edge count $m$, as well as triangle count (local and global). These generalize results of \cite{nilli} and \cite{gimbel}, which show similar bounds for the chromatic number as a function of $m$ for triangle-free graphs. As in the vertex-based bounds, we will show that when $t, y$ are small, the worst-case behavior for $\chi(G)$ is essentially the same as if $G$ had no triangle at all, namely $\chi(G) \la \frac{m^{1/3}}{\log^{2/3} m}$.

\begin{theorem}
\label{prop0a}
Suppose $G$ has $m$ edges and local triangle bound $y$. Then
$$
\chi(G) \la \frac{m^{1/3}}{\log^{2/3} m} + \frac{m^{1/4} y^{1/4}}{\log^{3/4} (m/y)}
$$
\end{theorem}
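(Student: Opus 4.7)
The plan is to mirror the proof of Theorem~\ref{prop0}, but instead of iteratively peeling off independent sets from high-degree neighborhoods (which produced a bound in terms of $n$), I would split the vertex set once into high-degree and low-degree pieces and apply existing tools to each. Specifically, let $d$ be a parameter to be chosen, and partition $V = H \sqcup L$ where $H = \{v : \deg(v) > d\}$ and $L = \{v : \deg(v) \leq d\}$. The handshake lemma gives $|H| \leq 2m/d$. I would color $G[L]$ via Theorem~\ref{aks-thm2} (which applies since $G[L]$ has maximum degree at most $d$), using $O(d/\log(d^2/y))$ colors, and color $G[H]$ by feeding its vertex count $|H| \leq 2m/d$ and local triangle bound $y$ into Theorem~\ref{prop0}, producing $O(\sqrt{(m/d)/\log(m/d)} + (m/d)^{1/3} y^{1/3}/\log^{2/3}((m/d)^2/y))$ colors. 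Then $\chi(G) \leq \chi(L) + \chi(H)$ is a sum of four terms depending on $d, m, y$.

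The parameter $d$ is then chosen in two regimes. In the small-$y$ regime (roughly $y \la m^{1/2} \log^{1/2} m$), set $d \approx m^{1/3} \log^{1/3} m$; then the AKS contribution collapses to $O(m^{1/3}/\log^{2/3} m)$, the $\sqrt{n/\log n}$ part of $\chi(H)$ also becomes $O(m^{1/3}/\log^{2/3} m)$, and both ``triangle-correction'' terms are dominated. In the large-$y$ regime, set $d \approx m^{1/4} y^{1/4} \log^{1/4}(m/y)$; a short computation shows $d^2/y \approx \sqrt{m/y}\,\log^{1/2}(m/y)$, so $\log(d^2/y) \approx \log(m/y)$, and the AKS term becomes $O(m^{1/4} y^{1/4}/\log^{3/4}(m/y))$; the triangle part of $\chi(H)$ is bounded by the same quantity after a parallel calculation.

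The main obstacle is the bookkeeping across the two regimes: verifying that in each case all four output terms are controlled by the claimed two-term upper bound. One subtlety is that Theorem~\ref{prop0}, when fed $n = 2m/d$, produces a $\log^{2/3}((m/d)^2/y)$ in its denominator rather than the $\log^{3/4}(m/y)$ appearing in the claim, so one must check that the optimal $d$ makes these logarithms commensurate. Boundary issues also need attention --- when $d^2/y$ is small enough to fall into the truncated-logarithm range, or when $y$ is so large that Theorem~\ref{aks-thm2}'s hypothesis $y \leq d^2/2$ fails --- but in the latter case $y = \Omega(m)$, the claimed second term already exceeds $\sqrt{m}$, making the bound trivial via $\chi(G) \leq \sqrt{2m}$. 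Once these edge cases are dispatched, the randomized polynomial-time algorithm follows immediately from the constructive guarantees of Theorems~\ref{aks-thm2} and \ref{prop0}.
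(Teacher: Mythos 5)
Your decomposition is exactly the one the paper uses: split $V$ by degree threshold $d$, color the low-degree part with Theorem~\ref{aks-thm2} and the high-degree part with Theorem~\ref{prop0} via $|H| \la m/d$, then optimize $d$ in two regimes; your large-$y$ choice $d \approx (m y \log(m/y))^{1/4}$ is the paper's choice and your computation of the resulting terms is right. The one concrete problem is the placement of the regime boundary. With $d \approx m^{1/3}\log^{1/3} m$, the triangle term coming from Theorem~\ref{prop0} applied to $H$ is
$$
\frac{(m/d)^{1/3} y^{1/3}}{\log^{2/3}((m/d)^2/y)} \approx \frac{m^{2/9} y^{1/3}}{\log^{7/9} m},
$$
and this is dominated by $m^{1/3}/\log^{2/3} m$ (equivalently, by the claimed second term) only when $y \la (m\log m)^{1/3}$, not up to $y \la m^{1/2}\log^{1/2} m$ as you assert. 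At the top of your stated small-$y$ regime, say $y = m^{1/2}\log^{1/2} m$, this term is about $m^{7/18}\log^{-11/18} m$, while the allowed bound $m^{1/4}y^{1/4}/\log^{3/4}(m/y)$ is about $m^{3/8}\log^{-5/8} m$; since $7/18 > 3/8$, your bound is off by a polynomial factor (up to $m^{1/72}$) throughout the range $(m\log m)^{1/3} \ll y \la m^{1/2}\log^{1/2} m$. The crossover between the two terms of the theorem is at $y \approx (m\log m)^{1/3}$, and that is where the regimes must be split.

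The fix is small and leaves your architecture intact: run the large-$y$ recipe $d = (m y \log(m/y))^{1/4}$ for all $y \geq (m\log m)^{1/3}$ (your check that $\log(d^2/y) \approx \log(m/y)$ still goes through there, and the $\chi(H)$ terms are controlled using $y \ga (m\log m)^{1/3}$), and reserve $d \approx m^{1/3}\log^{1/3} m$ for $y \leq (m\log m)^{1/3}$. The paper handles the small-$y$ case by a slightly slicker device --- it replaces $y$ by the larger valid bound $y' = (m\log m)^{1/3}$ and reuses the large-$y$ computation verbatim --- but either route works once the threshold is correct. Your handling of the degenerate cases (truncated logarithm, $y > d^2/2$ forcing $y = \Omega(m)$ and the trivial $\chi(G) \leq O(\sqrt{m})$ bound) is fine.
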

\begin{proof}
  If $y \geq m$, then this simply asserts that $\chi(G) \la \sqrt{m}$, which holds for any graph. So we assume $y < m$. Now let $f = \log (m/y)$, let $A$ denote the vertices of degree greater than $d$, and let  $B$ denote the vertices of degree $\leq d$, where $d$ is some parameter to be chosen. Note that $|A| \leq 2 m/d$. Applying Theorem~\ref{prop0} to $G[A]$ and Theorem~\ref{aks-thm2} to $G[B]$, we have
\begin{equation}
  \label{ghh1}
  \chi(G) \la \chi(A) + \chi(B) \la \sqrt{ \frac{m/d}{\log(m/d)}} + \frac{(m/d)^{1/3} y^{1/3}}{\log^{2/3} ( (m/d)^2/y ) } + \frac{d}{\log (d^2/y)}
\end{equation}

If $y < (m \log m)^{1/3}$, then set $d = (m \log m)^{1/3}$. We simplify the log terms in (\ref{ghh1}):
\begin{align*}
\log(d^2/y) &\geq \log( \frac{(m \log m)^{2/3}}{(m \log m)^{1/3}} ) \approx \log m \\
\log(m/d) &= \log(\frac{m}{(m \log m)^{1/3}}) \approx \log m \\
\log((m/d)^2/y) &\geq \log( \frac{m^2}{(m \log m)^{2/3} \times (m \log m)^{1/3}} ) \approx \log m
\end{align*}
Substituting the  bounds on the logarithm terms into (\ref{ghh1}) we get:
\begin{align*}
  \chi(G) &\la \sqrt{ \frac{ m / (m \log m)^{1/3} }{\log m} } + \frac{ (\frac{m}{(m \log m)^{1/3}})^{1/3} ( (m \log m)^{1/3} )^{1/3} }{\log^{2/3} m} + \frac{ (m \log m)^{1/3}}{\log m} = \frac{3 m^{1/3}}{\log^{2/3} m}
\end{align*}

If $y \geq (m \log m)^{1/3}$, then set $d = (m y f)^{1/4}$. As $y \leq m$, Observation~\ref{obs1} gives $y f = y \log(m/y) \leq m$ and so $d \leq \sqrt{m}$.  We thus simplify the log terms as:
\begin{align*} 
  \log(d^2/y) &= \log( (m y f)^{1/2} / y ) = \log( (m/y)^{1/2} \log^{1/2}(m/y) ) \approx \log(m/y) = f \\
\log(m/d) &\geq \log(\frac{m}{\sqrt{m}}) \approx \log m \\
\log((m/d)^2/y) & \geq \log( \frac{m^2}{(\sqrt{m})^2 y} ) = \log(\frac{m}{y}) \approx f
\end{align*}

Substituting the  bounds on the logarithm terms into (\ref{ghh1}) we get:
\begin{align*}
  \chi(G) &\la \sqrt{ \frac{ m / (m \log m)^{1/3} }{\log m} } + \frac{(m/( m y f)^{1/4})^{1/3} y^{1/3}}{f^{2/3}} + \frac{(m y f)^{1/4}}{f} = \frac{m^{1/3}}{\log^{2/3} m} + \frac{2 (m y)^{1/4}}{f^{3/4}} 
\end{align*}
\end{proof}

\begin{theorem}
\label{ttprop3}
Suppose $G$ has $m$ edges, $t$ triangles, and local triangle bound $y$. Then
$$
\chi(G) \la \frac{m^{1/3}}{\log^{2/3} m} + \frac{t^{1/3} \log \log(t^2/y^3)}{\log^{2/3}(t^2/y^3)}
$$
\end{theorem}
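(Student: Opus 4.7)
The plan is to mimic the bootstrap step used in the second half of the proof of Theorem~\ref{ttprop2}, but with Theorem~\ref{prop0a} playing the role of Theorem~\ref{prop0}. Set $f = \log(t^2/y^3)$, pick a threshold $z$ to be tuned below, and partition $V = A \sqcup B$ where $A$ consists of those vertices lying in at least $z$ triangles and $B = V \setminus A$. Counting triangle--vertex incidences gives $|A| \leq 3t/z$, and by construction $G[B]$ has local triangle bound at most $z$.

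Applying Theorem~\ref{ttprop2} to $G[A]$ (which inherits at most $t$ triangles and local bound at most $y$) yields
$$
\chi(A) \la \sqrt{\frac{t/z}{\log(t/z)}} + \frac{t^{1/3}\log f}{f^{2/3}},
$$
and applying Theorem~\ref{prop0a} to $G[B]$ yields
$$
\chi(B) \la \frac{m^{1/3}}{\log^{2/3}m} + \frac{m^{1/4}z^{1/4}}{\log^{3/4}(m/z)}.
$$
I would choose $z \approx t\log^{1/3}m/m^{2/3}$, which makes $t/z \approx m^{2/3}/\log^{1/3}m$ and hence $\sqrt{(t/z)/\log(t/z)} \la m^{1/3}/\log^{2/3}m$, as desired. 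If this choice of $z$ happens to exceed $y$, the partition degenerates ($A = \emptyset$) and one applies Theorem~\ref{prop0a} to all of $G$ directly; I expect this degenerate regime to absorb cleanly into the target bound, exactly as the case $y \leq (m\log m)^{1/3}$ does in the proof of Theorem~\ref{prop0a}.

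The main obstacle is the log-bookkeeping needed to verify, in all regimes, that the second term of $\chi(B)$, namely $m^{1/4}z^{1/4}/\log^{3/4}(m/z)$, is dominated by either $m^{1/3}/\log^{2/3}m$ or $t^{1/3}\log f/f^{2/3}$. This is directly analogous to the nuisance term $n^{1/3}t^{1/9}f^{1/9}/\log^{2/3}n$ encountered near the end of the proof of Theorem~\ref{ttprop2}, which was handled by a short case split at $t = n^{3/2}\sqrt{\log n}$. I anticipate an analogous case split here at roughly $t \approx m$, together with the routine bound $\log(m/z) \approx \log m$, and no genuinely new ideas beyond those already developed in the paper.
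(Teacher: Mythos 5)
Your proposal is correct and follows essentially the same route as the paper: the same partition of $V$ into high- and low-triangle-degree vertices, Theorem~\ref{ttprop2} applied to $A$ and Theorem~\ref{prop0a} applied to $B$, and the same final case split at $t \approx m$ (the paper uses the threshold $z = f^{1/3}t^{2/3}/m^{1/3}$ rather than your $z \approx t\log^{1/3}m/m^{2/3}$, but either tuning works, and the residual terms are absorbed in the same way using $f \la \log m$). The bookkeeping you defer does go through as you anticipate.
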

\begin{proof}
We assume $t > 0$, as otherwise this is simply Theorem~\ref{triangle-free-thm1}.  Let $f = \log(t^2/y^3)$. Let $A$ denote the set of vertices incident on at least $z = \frac{t^{2/3} \log^{1/3} m}{m^{1/3}}$ triangles, and let $B$ denote the remaining vertices. Note that $|A| \leq 3 t/z$ and $G[B]$ has local triangle bound $z$. Applying Theorem~\ref{ttprop2} to $G[A]$ and Theorem~\ref{prop0a} to $G[B]$ we get
  \begin{equation}
    \label{hgg6}
  \chi(G) \leq \chi(A) + \chi(B) \la \frac{t^{1/3} \log f}{f^{2/3}} + \sqrt{ \frac{t/z}{\log(t/z)} } + \frac{m^{1/3}}{\log^{2/3} m} + \frac{(m z)^{1/4}}{\log^{3/4}(m/z)}
  \end{equation}

  Noting that $1 \leq t \leq m^{3/2}$, we compute these log terms as:
  \begin{align*}
    \log(t/z) &= \log( \frac{(m t)^{1/3}}{\log^{1/3} m} ) \geq \log( \frac{m^{1/3}}{\log^{1/3} m} ) \approx \log m \\
    \log(m/z) &= \log( \frac{m^{4/3}}{(\log m)^{1/3} t^{2/3}} ) \geq \log( \frac{m^{4/3}}{(\log m)^{1/3} (m^{3/2})^{2/3}} ) = \log( \frac{m^{1/3}}{\log^{1/3} m}) \approx \log m
  \end{align*}

  Substituting these bounds into (\ref{hgg6}) gives
$$
    \chi(G) \la \frac{t^{1/3} \log f}{f} + \sqrt{ \frac{t/z}{\log m} } + \frac{m^{1/3}}{\log^{2/3} m} + \frac{(m z)^{1/4}}{\log^{3/4} m} = \frac{t^{1/3} \log f}{f} + \frac{m^{1/3}}{\log^{2/3} m} + \frac{ 2 m^{1/6} t^{1/6}}{\log^{2/3} m}
    $$

    Now observe that (using the inequality $a b \leq a^2 + b^2$), we have
    $$
    \frac{m^{1/6} t^{1/6}}{\log^{2/3} m} \leq \frac{m^{1/3}}{\log^{2/3} m} + \frac{t^{1/3}}{\log^{2/3} m} \leq  \frac{m^{1/3}}{\log^{2/3} m} + \frac{t^{1/3}}{f^{2/3}}
    $$
    which completes the proof.
\end{proof}

\section{Lower bounds}
\label{lb-sec}
We next show matching lower bounds. We will show that Theorems~\ref{prop0} and \ref{prop0a} are tight for all admissible values of $m,n,y$ up to constant factors, while Theorem~\ref{ttprop2} is tight up to factors of $\log \log(t^2/y^3)$ for all admissible values of $m,n,y,t$. The situation for Theorem~\ref{ttprop3} is slightly more complicated; in general, the bounds given by Theorem~\ref{ttprop3} and Theorem~\ref{prop0a} are incomparable. For a given value of $m,n,y,t$, we show that either Theorem~\ref{ttprop3} or Theorem~\ref{prop0a} is tight (the latter up to a factor of $\log \log(t^2/y^3)$). The lower bounds apply even for the fractional chromatic number (see Section~\ref{conj-sec} for the definition and more details).

We begin by recalling a result of \cite{kim}:
\begin{theorem}[\cite{kim}]
\label{kim-thm}
For any integer $n \geq 1$, there exists a graph $H_n$ on $n$ vertices with the following properties:
\begin{enumerate}
\item[(A1)] $H_n$ is triangle-free
\item[(A2)] Each vertex has degree at most $O(\sqrt{n \log n})$
\item[(A3)] $\alpha(H_n) \leq  O(\sqrt{n \log n})$  (where $\alpha(G)$ denotes the size of the maximum independent set of $G$)
\item[(A4)] $H_n$ has chromatic number $\chi(H_n) \geq \Omega(\sqrt{\frac{n}{\log n}})$.
\end{enumerate}
\end{theorem}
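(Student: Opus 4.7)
The plan is to invoke Kim's \emph{triangle-free random process}. Start with the empty graph $H^{(0)}$ on $n$ vertices and, at each step $i$, let $S^{(i)}$ denote the set of ``safe'' pairs -- those whose addition to $H^{(i)}$ would not create a triangle (equivalently, pairs with no common neighbor in $H^{(i)}$). Pick an edge uniformly at random from $S^{(i)}$ and add it; continue until $S^{(i)} = \emptyset$. Property (A1) is then automatic by construction, so the task reduces to controlling the maximum degree and the independence number of the terminal graph $H_n$.

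The bulk of the proof is a differential-equation-method analysis. Parametrize time by $t = i/n^{3/2}$, and track key random variables: the number of safe pairs $X^{(i)} = |S^{(i)}|$ and, for each vertex $v$, the number $Y_v^{(i)}$ of safe pairs incident to $v$. Heuristic calculations of one-step expected changes suggest deterministic trajectories $X^{(i)} \approx x(t) n^2$ and $Y_v^{(i)} \approx y(t) n$ for explicit smooth functions $x, y$. One then proves by induction over $i$, using Azuma-Hoeffding-type concentration for bounded-difference martingales, that each such quantity stays within a small multiplicative error of its trajectory with probability $1 - o(1)$, simultaneously for all vertices. The critical termination time works out to $t^* = \Theta(\sqrt{\log n})$, at which point the cumulative rate of edges incident to each $v$ yields a final degree of $O(\sqrt{n \log n})$, giving (A2).

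For (A3), one bounds the probability that a fixed candidate set $I$ of size $\alpha = C\sqrt{n \log n}$ is independent in $H_n$. The idea is to track the number of safe pairs lying inside $I$; provided $C$ is large enough, this quantity is well approximated by a deterministic trajectory that remains positive up to $t^*$, so with overwhelming probability the process selects one such pair before terminating, certifying that $I$ is not independent. A union bound over all $\binom{n}{\alpha}$ candidate sets -- which is affordable precisely because $\alpha = \Theta(\sqrt{n \log n})$ matches the entropic cost -- establishes $\alpha(H_n) \leq O(\sqrt{n \log n})$ with high probability. Finally, (A4) is immediate from (A3) since $\chi(H_n) \geq n/\alpha(H_n)$.

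The hard part is the concentration analysis itself: one must track all degrees, codegrees, and safe-pair counts (both global and restricted to candidate sets $I$) to within the required error tolerance over $\Theta(n^{3/2}\sqrt{\log n})$ steps, designing martingales whose bounded differences are small enough that the accumulated error remains negligible at termination, while simultaneously paying for a union bound over vertices, pairs, and candidate independent sets.
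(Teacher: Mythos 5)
The paper does not actually prove this statement; it imports it wholesale from Kim's paper \cite{kim} (with a remark that (A2) must be extracted from ``Property 1'' there), so there is no in-paper argument to measure you against. Your outline is a legitimate and well-known route to the theorem, but it is worth noting that it is \emph{not} Kim's route: Kim's original construction uses the semi-random (nibble) method with the Local Lemma, whereas the analysis you sketch --- the triangle-free random process controlled by the differential equation method, terminating at $m = \Theta(n^{3/2}\sqrt{\log n})$ edges with all degrees $O(\sqrt{n\log n})$ and $\alpha = O(\sqrt{n\log n})$ --- is the later approach of Bohman (and, with sharper constants, Bohman--Keevash and Fiz Pontiveros--Griffiths--Morris). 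Both yield exactly the package (A1)--(A4), and your derivation of (A4) from (A3) via $\chi \geq n/\alpha$ is fine. Your entropy accounting for (A3) is also the right shape: the failure probability $\exp(-\Theta(C^2\,\alpha^2 m/n^2))$ beats $\binom{n}{\alpha}\le\exp(O(\alpha\log n))$ precisely when $\alpha = C\sqrt{n\log n}$ with $C$ large.

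That said, as a proof this is only a roadmap. Everything you label ``the hard part'' is in fact the entire theorem: establishing that the safe-pair counts (global, per-vertex, and restricted to each candidate set $I$) track their trajectories up to time $t^*=\Theta(\sqrt{\log n})$ requires a self-correcting martingale analysis that occupies tens of pages in the literature, and the union bound over candidate sets $I$ forces you to prove the trajectory estimates with failure probability $\exp(-\omega(\alpha\log n))$, not merely $1-o(1)$. Two smaller points: you should stop the process after $\Theta(n^{3/2}\sqrt{\log n})$ steps rather than running it until $S^{(i)}=\emptyset$ (stopping early costs nothing for (A1)--(A3) and spares you from proving when the process actually dies); and for (A3) it is not enough that the trajectory of safe pairs inside $I$ ``remains positive'' --- you need it to remain a constant fraction of $|I|^2$ so that the per-step probability of landing an edge in $I$ accumulates to the required $\exp(-\Theta(\alpha^2 m/n^2))$. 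Since the paper treats this as a black box, the honest conclusion is that your proposal correctly identifies a valid proof strategy but does not constitute a proof.
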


Following a strategy of \cite{aks}, we construct a blow-up of $H_n$ with the complete clique $K_i$, to give:
\begin{proposition}
\label{kim-thm2}
For any integers $k,i  \geq 1$, there is a graph $H_{k,i}$ with the following properties:
\begin{enumerate}
\item[(B1)] $H_{k,i}$ contains $O(k i)$ vertices
\item[(B2)] $H_{k,i}$ has local triangle bound $O(i^2 \sqrt{k \log k})$.
\item[(B3)] Each vertex has degree at most $O(i \sqrt{k \log k})$
\item[(B4)] $\alpha(H_{k,i}) \leq O(\sqrt{k \log k})$.
\item[(B5)] The fractional chromatic number of $H_{k,i}$ satisfies $\chi_{f} (H_{k,i}) \ga i \sqrt{ \frac{k}{\log k}}$.
\end{enumerate}
\end{proposition}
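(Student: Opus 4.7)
The plan is to construct $H_{k,i}$ as the \emph{lexicographic product} (or ``clique blow-up'') of the Kim graph: take the graph $H_{\lceil k \rceil}$ from Theorem~\ref{kim-thm}, replace each vertex $v$ by a clique $B_v$ of size $\lceil i \rceil$, and for each edge $(u,v)$ of the Kim graph put all $\lceil i \rceil^2$ edges between $B_u$ and $B_v$. Properties (B1) and (B3) are immediate from (A2): the vertex count is $\lceil k \rceil \lceil i \rceil = O(ki)$, and a vertex in $B_u$ has $(\lceil i \rceil-1) + \lceil i \rceil \cdot \mathrm{deg}_{H_k}(u) = O(i \sqrt{k \log k})$ neighbors.

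For (B4), the key observation is that if $I \subseteq V(H_{k,i})$ is independent, then $I$ contains at most one vertex from each blob $B_v$ (since each $B_v$ is a clique), and the set of blobs hit by $I$ forms an independent set in $H_{\lceil k \rceil}$. Hence $\alpha(H_{k,i}) \leq \alpha(H_{\lceil k \rceil}) = O(\sqrt{k \log k})$ by (A3). Property (B5) then follows from the trivial bound $\chi_{\mathrm{f}}(H_{k,i}) \geq |V(H_{k,i})|/\alpha(H_{k,i}) \ga ki/\sqrt{k \log k} = i\sqrt{k/\log k}$, with no need to invoke Geller--Stahl.

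The step that requires the most care is (B2). I would split $v$-containing triangles in $H_{k,i}$ (for $v \in B_u$) by how many of the three vertices lie in $B_u$: (i) all three in $B_u$ contributes $\binom{\lceil i \rceil - 1}{2} = O(i^2)$; (ii) exactly two in $B_u$ (one being $v$) and the third in an adjacent blob contributes $(\lceil i \rceil - 1)\cdot \lceil i \rceil \cdot \mathrm{deg}_{H_k}(u) = O(i^2 \sqrt{k \log k})$; (iii) exactly one in $B_u$, namely $v$, with the other two vertices contained in blobs $B_{u_1}, B_{u_2}$ adjacent to $B_u$ and to each other. In case (iii), if $u_1 \neq u_2$ then $\{u, u_1, u_2\}$ would be a triangle in $H_{\lceil k \rceil}$, which is forbidden by (A1); hence $u_1 = u_2$, and this contributes $\mathrm{deg}_{H_k}(u)\binom{\lceil i \rceil}{2} = O(i^2 \sqrt{k \log k})$. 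Summing gives $O(i^2 \sqrt{k \log k})$, as desired.

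The main obstacle is the triangle count: it is crucial that the triangle-freeness of $H_k$ rules out case (iii) with $u_1 \neq u_2$, which would otherwise blow up the count by a factor involving $\mathrm{deg}_{H_k}(u)^2$; the clique blow-up is chosen precisely so that the only triangles are those forced by the interaction of within-blob edges with the underlying Kim graph. All other verifications reduce to straightforward arithmetic using the bounds from Theorem~\ref{kim-thm}.
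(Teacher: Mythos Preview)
Your proposal is correct and is essentially the same as the paper's own proof: both build $H_{k,i}$ as the clique blow-up of the Kim graph $H_{\lceil k\rceil}$, derive (B1) and (B3) directly from (A2), obtain (B4) from the fact that an independent set meets each blob at most once and projects to an independent set of $H_{\lceil k\rceil}$, deduce (B5) from $\chi_{\mathrm f}\geq |V|/\alpha$, and verify (B2) by a case analysis on how the three vertices of a triangle distribute among the blobs, with (A1) ruling out the case of three distinct underlying vertices. The only difference is cosmetic: the paper organizes the triangle cases by which of the underlying Kim vertices coincide, whereas you organize by how many triangle vertices lie in the same blob as $v$; these partitions are equivalent.
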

\begin{proof}
  We replace each vertex of $H_k$ with an $i$-clique. For every edge $(x,y) \in H_{n}$, we place an edge between all the corresponding copies of $x, y$ in $H_{k,i}$, a total of $i^2$ edges. Now (B1) follows immediately from the fact that $H_k$ contains $k$ vertices. To show (B3), consider a vertex $v \in H_k$ and a corresponding vertex $v'$ in $H_{k,i}$. The vertex $v'$ has $i-1$ edges going to the other vertices in the clique corresponding to $v$. For each neighbor $w$ of $v$ in $H_k$, the vertex $v'$ has $i$ edges (one for each vertex $w'$ in the clique corresponding to $w$). Overall, it has $(i-1) + \deg(v) i$ neighbors. By(A2), $\deg(v) \leq O(\sqrt{k \log k})$ and so this is $O( (i-1) + i \sqrt{k \log k})$. Since $k \geq 1$, the first term $i-1$ is negligible compared to the second one.
  
To show (B2), consider a vertex $x' \in H_{k,i}$ corresponding to $x \in H_k$. We want to count the triangles $x', y', z'$ in $H_{k,i}$, where $y', z'$ correspond to $y, z \in H_k$. We cannot have $x,y,z$ be distinct as otherwise $(x,y), (y,z), (x,z)$ would be a triangle in $H_k$. When $y = z = x$, the total number of such triangles is at most $i^2$ (since $y', z'$ must lie in the same clique as $x'$). When $y = z \neq x$, then there must be an edge in $H_k$ from $x$ to $y$. There are at most $O(\sqrt{k \log k})$ choices of $y$ and once $y$ is fixed, at most $i^2$ choices for $y', z'$.  Finally, when $y = x \neq z$, there must be an edge in $H_k$ from $x$ to $z$. There are at most $O(\sqrt{k \log k})$ choices for $z$ and at most $i^2$ choices for $y', z'$. In total, there are $O(i^2 \sqrt{k \log k})$ triangles involving $x'$.

To show (B4), observe that if $I$ is an independent set of $H_{k,i}$, then all of its vertices must correspond to distinct vertices of $H_k$, and it must correspond to an independent set of $H_k$. So $|I| \leq O(\sqrt{k \log k})$. 

The bound (B5) follows from (B1), (B4) and the bound $\chi_{f} (G) \geq \frac{ |V(G)|}{\alpha(G)}$.
\end{proof}

\textbf{Note on rescaling for Proposition~\ref{kim-thm2}.} The bounds of Proposition~\ref{kim-thm2} will also hold for any real numbers $k,i$ which are bounded uniformly away from $0$, i.e. satisfying $k, i \geq c$ for some constant $c > 0$. To see this, we simply replace the real numbers $k,i$ with the integers $\lceil k \rceil, \lceil i \rceil$. Since $k, i$ are bounded from $0$, the ratios $\lceil k \rceil/k$ and $\lceil i \rceil / i$ are bounded from above by constants. With a slight abuse of our asymptotic notation, we refer to this condition as $i, k \ga 1$. 

\begin{proposition}
\label{lb0}
For any integers $n, y \leq n^2, t \leq n y$, there is a graph $G$ with at most $n$ vertices, at most $t$ triangles, local triangle bound $y$, and such that
$$
\chi(G) \geq \chi_{f}(G) \ga \sqrt{ \frac{n}{\log n}} +  \frac{t^{1/3}}{\log^{2/3} (t^2/y^3)}
$$
\end{proposition}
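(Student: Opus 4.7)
The plan is to construct $G$ as a disjoint union $G_1 \sqcup G_2$. Since the fractional chromatic number of a disjoint union equals the maximum of the components' values, while vertex and triangle counts (both global and per-vertex) combine additively across components, it suffices to arrange $G_1$ to supply the $\sqrt{n/\log n}$ lower bound and $G_2$ to supply the $t^{1/3}/\log^{2/3}(t^2/y^3)$ lower bound, each within its own vertex, triangle, and local-triangle budget. For $G_1$ I take $H_{\lceil n/2 \rceil}$ from Theorem~\ref{kim-thm}: this is triangle-free on $\Theta(n)$ vertices with $\chi_{\text{f}}(G_1) \ga \sqrt{n/\log n}$ and contributes no triangles to $G$.

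For $G_2$ I use the blow-up construction $H_{k,i}$ of Proposition~\ref{kim-thm2}. Writing $f = \log(t^2/y^3)$, the parameter choice will be
\[
k = \lceil c_1\, t^{4/3} f^{1/3}/y^2 \rceil, \qquad i = \lceil c_2\, y/(t^{1/3} f^{1/3}) \rceil
\]
for suitably small constants $c_1, c_2 > 0$. The key identity $e^f = t^2/y^3$ immediately gives $\log k = \tfrac{2}{3} f + O(\log f) \approx f$. Plugging into the four bounds of Proposition~\ref{kim-thm2} then verifies everything on the nose: $ki \approx t/y$, which is $\la n$ by the hypothesis $t \leq ny$; the local triangle bound $i^2 \sqrt{k \log k} \la y$; the total triangle count $i^3 k^{3/2} \sqrt{\log k} \la t$; and, from property~(B5), $\chi_{\text{f}}(G_2) \ga i\sqrt{k/\log k} \ga t^{1/3}/f^{2/3}$.

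Two edge cases then require separate handling, and the main technical obstacle lies precisely here. First, if $t \leq y^{3/2}$ then $f = 1$ by truncation and the formula above degenerates ($k \leq 1$); in this regime I instead take $G_2$ to be a clique $K_{\lceil c t^{1/3}\rceil}$ for small $c > 0$, which has $O(t)$ triangles, local triangle count $O(t^{2/3}) \leq O(y)$, and chromatic number $\Theta(t^{1/3}) = \Theta(t^{1/3}/f^{2/3})$. Second, the formula may give $i < 1$, which happens exactly when $y^3 < t f$; I claim that in this regime $t^{1/3}/f^{2/3} \la \sqrt{n/\log n}$ already, so $G_2$ can be dropped and $G_1$ alone suffices. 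Indeed, combining $y^3 < tf$ with $t \leq n y$ gives $y^2 < n f$ and hence $t < n^{3/2} f^{1/2}$, yielding $t^{1/3}/f^{2/3} < n^{1/2}/f^{1/2}$, which is $\la \sqrt{n/\log n}$ whenever $f \ga \log n$; in the remaining sub-case $f < \log n$ one uses $e^f < n$ (equivalently $y^3 > t^2/n$) together with $y^3 < t f$ to derive $t < n f$, hence $t^{1/3}/f^{2/3} < n^{1/3} \la \sqrt{n/\log n}$ for $n$ above an absolute constant (and for $n$ below that constant the bound is absorbed into $\ga$). Carrying out this edge-case analysis carefully --- verifying that whenever the main $(k,i)$ formula falls outside the valid range $k,i \ga 1$ of Proposition~\ref{kim-thm2}, the lower-bound term $t^{1/3}/f^{2/3}$ has already been supplied by $G_1$ --- is what I expect to be the hardest part of the argument.
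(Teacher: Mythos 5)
Your proposal is correct and essentially matches the paper's proof: both rest on Kim's graph $H_n$ for the $\sqrt{n/\log n}$ term and the blow-up $H_{k,i}$ with the same parameters $k \approx f^{1/3}t^{4/3}/y^2$, $i \approx y/(ft)^{1/3}$ for the $t^{1/3}/f^{2/3}$ term, with the same degenerate cases ($f=1$ handled by a clique on $\sim t^{1/3}$ vertices; $i<1$ handled by observing the first term then dominates). The only cosmetic difference is that you combine the two constructions by disjoint union (using $\chi_{\text{f}}(G_1 \sqcup G_2) = \max_j \chi_{\text{f}}(G_j) \geq \tfrac12\sum_j \chi_{\text{f}}(G_j)$), whereas the paper simply outputs whichever single construction realizes the larger of the two terms.
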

\begin{proof}
  In order to achieve the bound $\chi_{f}(G) \ga \sqrt{ \frac{n}{\log n}}$, we simply take $G = H_n$. Thus, it suffices to show that we can find such a graph $G$ with
  $$
  \chi_{f}(G) \ga \frac{t^{1/3}}{\log^{2/3} (t^2/y^3)}
  $$
  By rescaling, it suffices to show that a graph has this value of $\chi_{f} (G)$ and $O(n)$ vertices, $O(t)$ triangles, and $O(y)$ local triangle bound. Also, as we have discussed above, we can apply Proposition~\ref{kim-thm2} with real numbers $k, i \ga 1$. Let us define $f = \log(t^2/y^3)$.
  
  We will consider a number of cases.
  
\textbf{Case I: $\bm{t \leq y^{3/2}}$.} In this case, $f = 1$, and we need to show $\chi_{f}(G) \ga t^{1/3}$. Take $G$ to be the complete graph on $t^{1/3}$ vertices. This clearly has $O(t)$ triangles and $O(n)$ vertices. Also, it has a local triangle bound of $t^{2/3} \leq y$.

\textbf{Case II: $\bm{t > y^{3/2}}$ and $\bm{y \leq \sqrt{n \log n}}$}. We have $t \leq n y$ and so by Observation~\ref{obs1} we have
$$
\frac{t^{1/3}}{\log^{2/3}(t^2/y^3)} \leq \frac{(n y)^{1/3}}{\log^{2/3}((n y)^2/y^3)} = \frac{(n y)^{1/3}}{\log^{2/3}(n^2/y)}
$$

Since $y \leq \sqrt{n \log n}$, the logarithm term is bounded by $\log( n^2/y ) \approx \log n$, and so
$$
\frac{t^{1/3}}{f^{2/3}} \la \frac{(n y)^{1/3}}{\log^{2/3} n} \leq \sqrt{ \frac{n}{\log n}}
$$
So in order to show the desired bound on $\chi(G)$, we only need to show that $\chi(G) \ga \sqrt{ n/\log n}$. This is achieved by simply taking $G = H_n$.

\textbf{Case III: $\bm{t > y^{3/2}}$ and $\bm{y > \sqrt{n \log n}}$.}
Then apply Proposition~\ref{kim-thm2} with
$$
i = \frac{y}{f^{1/3} t^{1/3}}, \qquad k = \frac{f^{1/3} t^{4/3}}{y^2}
$$
and take our graph $G$ to be $G = H_{k,i}$.

We must first show that $i,k \ga 1$. For the former term, we use the bound $t \leq n y$ to get:
$$
i = \frac{y}{\log(t^2/y^3)^{1/3} t^{1/3}} \ga \frac{y}{(\log n)^{1/3} (n y)^{1/3}} = \frac{y^{2/3}}{(n \log n)^{1/3}} \geq \frac{(\sqrt{n \log n})^{2/3}}{(n \log n)^{1/3}} = 1
$$

For the latter term, we use the bound $t > y^{3/2}$ to get:
$$
k = \frac{f^{1/3} t^{4/3}}{y^2} \geq \frac{f^{1/3} (y^{3/2})^{4/3}}{y^2} = f^{1/3} \geq 1
$$

We can estimate $\log k$ as:
$$
\log k = \log (f^{1/3} t^{4/3} / y^2) = \log\bigl( (t^2/y^3)^{2/3} \log^{1/3}(t^2/y^3)) \approx \log(t^2/y^3) = f
$$

We next show that $G$ has the desired chromatic number. By (B5), we have
$$
\chi_{f}(H_{k,i}) \ga \frac{i \sqrt{k}}{\sqrt{\log k}} = \frac{t^{1/3}}{f^{1/6} \sqrt{\log k}} \ga \frac{t^{1/3}}{f^{1/6} f^{1/2}} = \frac{t^{1/3}}{f^{2/3}}
$$

Finally we verify that $G$ satisfies the required bounds on its vertex and triangle counts. First, by (B1), the vertex count of $G$ is $O(k i)$, which we bound as
$$
k i = t/y \leq \frac{n y}{y} = n
$$

By (B2), the local triangle bound is $O(i^2 \sqrt{k \log k})$. As $\log k \approx f$, we have $i^2 \sqrt{k \log k} = \frac{y \sqrt{\log k}}{\sqrt{f}} \approx y$.

Finally,  since $H_{k,i}$ has $O(k i)$ vertices and $O(y)$ local triangle bound, it has at most $O(k i y) = O(t)$ triangles.
\end{proof}

\begin{proposition}
Given any integers $m, y \geq 1$ and $t \leq m^{3/2}$, there is a graph $G$ with at most $m$ edges, at most $t$ triangles, local triangle bound $y$, and
$$
\chi(G) \geq \chi_{f}(G) \ga \min \Bigl(  \frac{(m y)^{1/4}}{\log^{3/4}(m/y)}, \frac{t^{1/3}}{\log^{2/3}(t^2/y^3)} \Bigr) + \frac{m^{1/3}}{\log^{2/3} m} 
$$
\end{proposition}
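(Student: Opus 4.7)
The plan follows the pattern of Proposition~\ref{lb0}. Write $A = (my)^{1/4}/\log^{3/4}(m/y)$, $B = t^{1/3}/\log^{2/3}(t^2/y^3)$, and $C = m^{1/3}/\log^{2/3} m$. I construct $G$ as a disjoint union $G = G_1 \sqcup G_2$, where $G_2$ supplies the $C$ term and $G_1$ supplies the $\min(A, B)$ term. For $G_2$, take the Kim graph $H_{n_2}$ from Theorem~\ref{kim-thm} with $n_2 = \Theta((m/\sqrt{\log m})^{2/3})$; this is triangle-free, has $O(m)$ edges, and satisfies $\chi_{\text{f}}(G_2) \ga C$. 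Since $\chi_{\text{f}}$ of a disjoint union equals the max of the two, which is at least half their sum, it suffices to produce $G_1$ with $\chi_{\text{f}}(G_1) \ga \min(A,B)$ and with the required edge, triangle, and local-triangle counts.

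For $G_1$, split on whether $t \leq y\sqrt{m}$ or $t > y\sqrt{m}$. In Case~1 ($t \leq y\sqrt{m}$, hence $f := \log(t^2/y^3) \leq \log(m/y)$), reuse the parameters of Proposition~\ref{lb0}, namely $k = f^{1/3}t^{4/3}/y^2$ and $i = y/(f^{1/3}t^{1/3})$, applied via Proposition~\ref{kim-thm2}. Proposition~\ref{lb0}'s analysis already gives $\chi_{\text{f}}(G_1) \ga B$, local bound $O(y)$, and $O(t)$ triangles. The only new ingredient is bounding the edge count $\Theta(ky) = \Theta(f^{1/3}t^{4/3}/y)$, which is $O(m)$ thanks to $t \leq y\sqrt{m}$ (giving $t^{4/3} \leq y^{4/3}m^{2/3}$) and $fy \leq y\log(m/y) \leq m$ (using $\log u \leq u$ for $u \geq 1$). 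Degenerate sub-cases ($f = 1$ or $k < 1$) are handled by a clique $K_{\lceil t^{1/3}\rceil}$, which in those regimes satisfies all three constraints and has chromatic number $\ga B$.

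In Case~2 ($t > y\sqrt{m}$, so $f > \log(m/y)$), use $k = m/y$ and $i = \min(i_1, i_2)$, where $i_1 = (y^3/(m\log(m/y)))^{1/4}$ makes the local and edge constraints bind at their maxima, and $i_2 = t^{1/3}y^{1/2}/(m^{1/2}\log^{1/6}(m/y))$ makes the total triangle constraint bind. At $i = i_1$ the bound $\chi_{\text{f}}(G_1) \ga A$ falls out of Proposition~\ref{kim-thm2}, and at $i = i_2$ one gets $\chi_{\text{f}}(G_1) \ga t^{1/3}/\log^{2/3}(m/y) \ga B$ (using $f > \log(m/y)$). If $i_2 < 1 \leq i_1$, replace $G_1$ by the triangle-free graph $H_{m/y}$; its edge count $(m/y)^{3/2}\sqrt{\log(m/y)}$ and its fractional chromatic number $\sqrt{(m/y)/\log(m/y)}$ are comfortable exactly under the condition $i_1 \ga 1$, and the latter is then $\ga A$. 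If instead $i_1 < 1$, the inequality $y^3 < m\log(m/y)$ forces $A^{12} < m^4/\log^8(m/y) \la m^4/\log^8 m = C^{12}$ (since $\log(m/y) \ga \log m$ in this regime), so $A \la C$, $\min(A,B) \la C$, and the Kim graph $G_2$ alone suffices.

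The main obstacle is the case analysis itself: in particular, one must show that the regimes in which the $H_{k,i}$ construction degenerates ($i$ or $k < 1$) are precisely those where $\min(A, B) = O(C)$, so that the Kim graph $G_2$ already delivers the required lower bound. Once parameters are chosen in each case, the verifications of edge, triangle, and local-triangle counts reduce to routine substitutions into Proposition~\ref{kim-thm2}(B1)--(B5).
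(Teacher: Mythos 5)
Your construction is essentially the paper's: blow-ups $H_{k,i}$ of Kim's graph via Proposition~\ref{kim-thm2}, with a two-case split according to which term of the $\min$ the construction should realize, and the $m^{1/3}/\log^{2/3}m$ term supplied by a separate triangle-free Kim graph (the paper folds this in by a ``we may assume'' reduction rather than a disjoint union, and draws the case boundary at $t \lessgtr g^2(my)^{3/4}/f^{9/4}$ rather than your cleaner $t \lessgtr y\sqrt{m}$, but these are cosmetic differences). Your parameter choices in Case~2, including the $\min(i_1,i_2)$ device, check out, and the verification that $i_1 < 1$ forces $A \la C$ is correct.

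There are, however, two concrete problems. First, in Case~1 you list the degenerate sub-cases as ``$f=1$ or $k<1$'' but omit $i<1$, i.e.\ $y^3 < ft$. You cannot dispose of this by citing Proposition~\ref{lb0}'s analysis: there, $i \ga 1$ is derived from the hypotheses $t \leq ny$ and $y \geq \sqrt{n\log n}$, which are not available in the edge-parametrized setting (the paper's own proof spends several lines re-deriving $y \ga (m\log m)^{1/3}$ from its case conditions precisely for this purpose). The gap is fillable --- from $y^3 < ft$ and $t \leq y\sqrt m$ one gets $y \la f^{1/2}m^{1/4}$, hence $\log(m/y)\approx \log m$ and $A \la m^{5/16}f^{1/8}/\log^{3/4}m \la C$, so $G_2$ alone suffices --- but some such argument must be supplied. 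Second, in the sub-case $i_2 < 1 \leq i_1$ of Case~2 your justification is backwards: under $i_1 \geq 1$, i.e.\ $y^3 \geq m\log(m/y)$, one has $\sqrt{(m/y)/\log(m/y)} \la A$, not $\ga A$. The conclusion nevertheless survives, because $i_2 < 1$ gives $t^{1/3} < (m/y)^{1/2}\log^{1/6}(m/y)\,$ and hence $B \leq t^{1/3}/\log^{2/3}(m/y) < \sqrt{(m/y)/\log(m/y)}$, and $i_2 < i_1$ places you in the regime $\min(A,B)=B$; so $H_{m/y}$ does beat $\min(A,B)$, just not for the reason you state.
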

\begin{proof}
Let $f = \log(m/y)$ and let $g = \log(t^2/y^3)$. We begin with a number of preliminary observations. First,   In order to achieve the bound $\chi_{f}(G) \ga \frac{m^{1/3}}{\log^{2/3} m}$, we can simply take $G = H_n$ for $n = m^{2/3}/\log^{1/3} m$. Thus, it suffices to find such a graph $G$ with
\begin{equation}
  \label{sas1}
  \chi_{f}(G) \ga \min \Bigl(  \frac{(m y)^{1/4}}{f^{3/4}}, \frac{t^{1/3}}{g^{2/3}} \Bigr)
  \end{equation}
By rescaling, it suffices to show that a graph has this value of $\chi_{f} (G)$ and $O(m)$ edges, $O(t)$ triangles, and $O(y)$ local triangle bound. Also, as we have discussed above, we can apply Proposition~\ref{kim-thm2} with real numbers $k, i \ga 1$. We will break this into a number of cases.

\textbf{Case I: $\bm{y < (m \log m)^{1/3}}$.} Then take $G$ to be the triangle-free graph with $m$ edges and $\chi_{f}(G) \ga \frac{m^{1/3}}{\log^{2/3} m}$. This satisfies (\ref{sas1}), as:
$$
\frac{ (m y)^{1/4} }{f^{3/4}} \leq \frac{ (m (m \log m)^{1/3})^{1/4} }{\log( \frac{m}{(m \log m)^{1/3}})^{3/4}} \approx \frac{m^{1/4} m^{1/12} \log^{1/12} m}{\log^{3/4} m} = \frac{m^{1/3}}{\log^{2/3} m}
$$

\textbf{Case II: $\bm{y > t^{2/3}}$.} In this case, we have $g = 1$. We take $G$ to be a clique on $t^{1/3}$ vertices. This graph has $\chi_{f}(G) = t^{1/3} = \frac{t^{1/3}}{g^{2/3}}$. Furthermore, it has $t$ triangles, local triangle bound $t^{2/3} \leq y$, and $t^{2/3} \leq m$ edges.

\textbf{Case III: $\bm{(m \log m)^{1/3} \leq y \leq t^{2/3}}$ and $\bm{t \geq m^{3/4} y^{3/4} / f^{1/4}}$.} Then apply Proposition~\ref{kim-thm2} with
$$
i = \frac{y^{3/4}}{(m f)^{1/4}} \qquad k = m/y
$$
and let $G = H_{k,i}$. The bound $y \leq t^{2/3} \leq m$ shows that $k \geq 1$ and the bound $y \geq (m \log m)^{1/3}$ shows that $i \geq 1$.

Note that $\log k = f$. So by (B5), we have $\chi_{f}(G) \ga \frac{i \sqrt{k}}{\sqrt{\log k}} = \frac{(m y)^{1/4}}{f^{3/4}}
$ as required by (\ref{sas1}).

By (B2), $G$ has local triangle bound $O(i^2 \sqrt{k \log k}) = O(y)$. By (B3), it has $O(k^{3/2} i^2 \sqrt{\log k}) = O(m)$ edges. The overall triangle count is $O(i k y) = O(\frac{m^{3/4} y^{3/4}}{f^{1/4}})$; by our hypothesis on the size of $t$, this is $O(t)$.

\textbf{Case IV: $\bm{(m \log m)^{1/3} \leq y \leq t^{2/3}}$ and $\bm{t < m^{3/4} y^{3/4} / f^{1/4}}$}. We begin by showing that $f \ga g$ in this case. Since $m > f^{1/3} t^{4/3} / y$, we have
$$
f = \log( \frac{m}{y} ) \geq \log( \frac{ f^{1/3} t^{4/3}/y}{y} ) = \log( \log^{1/3}(t^2/y^3) (t^2/y^3)^{2/3} ) \approx \log(t^2/y^3) = g
$$

In this case, we apply Proposition~\ref{kim-thm2} with
$$
i = \frac{y}{(g t)^{1/3}}, k = \frac{g^{1/3} t^{4/3}}{y^2}
$$
and take $G = H_{k,i}$.

We have $k \geq 1$ since $y \leq t^{2/3}$. To show $i \ga 1$, we use the upper bound on $t$ and lower bound on $y$, and the estimate $f \ga g$ to give:
$$
i \geq \frac{y}{g^{1/3} (m^{3/4} y^{3/4} / f^{1/4})^{1/3}} = \frac{f^{1/12} y^{3/4}}{m^{1/4} g^{1/3}} \ga \frac{g^{1/12} (m \log m)^{1/4}}{m^{1/4} g^{1/3}} = \frac{\log^{1/4} m}{g^{1/4}} \ga 1
$$

We also estimate $\log k$ in this case:
$$
\log k = \log( g^{1/3} t^{4/3}/y^2 ) = \log( \log^{1/3}(t^2/y^3)  (t^2/y^3)^{2/3} ) \approx \log( t^2/y^3) = g
$$

Thus, by (B2), the graph $G$ has local triangle bound $O(i^2 \sqrt{k \log k}) \la y$. By (B1) and (B2), it has $O(k i y) = O(t)$ triangles total. Finally, by (B3), it has $O(k^{3/2} i^2 \sqrt{\log k})$ edges, which we estimate as
$$
k^{3/2} i^2 \sqrt{\log k} \la \frac{g^{1/3} t^{4/3}}{y}
$$
Using the bound $g \la f$ and $t \leq m^{3/4} y^{3/4}/f^{1/4}$, we see this is at most $O(m)$.

Finally, by (B5), it has
$$
\chi_f(G) \ga \frac{i \sqrt{k}}{\sqrt{\log k}} \approx \frac{y \sqrt{g^{1/3} t^{4/3}/y^2}}{\sqrt{g}} = \frac{t^{2/3}}{g^{1/3}}
$$
thus satisfying (\ref{sas1}).
\end{proof}
\section{Getting the correct coefficient of $t^{1/3}$}
Suppose we have no information on the local triangle counts; in this case, Theorem~\ref{ttprop2} would give the following bound in terms of the vertex count $n$ and global triangle count $t$ alone:
$$
\chi(G) \leq O( t^{1/3} + \sqrt{\frac{n}{\log n}} )
$$
This bound is clearly tight up to constant factors. In this section, we compute a more precise formula, which gives us the correct coefficient of the term $t^{1/3}$. The correct coefficient of the term $\sqrt{\frac{n}{\log n}}$ is not currently known even for triangle-free graphs.

We begin by showing a more precise bound, albeit with a worse asymptotic dependence on $n$.
\begin{proposition}
\label{tw-prop1}
If $G$ has $n$ vertices and $t$ triangles, then $\chi(G) \leq 2 \sqrt{n} + (6 t)^{1/3}$.
\end{proposition}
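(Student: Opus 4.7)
The plan is to prove the bound by strong induction on $n$. The base case, $n \le 10000$, is immediate: here $\chi(G) \le n \le 100\sqrt{n}$, so the claim holds trivially.

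For the inductive step with $n > 10000$, I would distinguish two cases according to the minimum degree of $G$. If there exists a vertex $v$ with $\deg_G(v) \le 100\sqrt{n} + (6t)^{1/3} - 1$, the induction hypothesis applied to $G - v$ (which has $n-1$ vertices and at most $t$ triangles) yields a coloring of $G - v$ with at most $100\sqrt{n-1} + (6t)^{1/3} \le 100\sqrt{n} + (6t)^{1/3}$ colors, and this coloring extends to $v$ using any color not among its colored neighbors.

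In the harder case, where $\delta(G) \ge 100\sqrt{n} + (6t)^{1/3}$, I would use an averaging argument: since $\sum_{u \in V} t_u = 3t$ (where $t_u$ is the number of triangles incident to $u$), some vertex $v$ satisfies $t_v \le 3t/n$. The neighborhood $N(v)$ induces a graph on $|N(v)| \ge 100\sqrt{n}$ vertices with at most $t_v \le 3t/n$ edges (its edges are precisely the triangles of $G$ through $v$). Tur\'an's theorem (Theorem~\ref{turan-thm}) then guarantees an independent set $I \subseteq N(v)$ of size at least $|N(v)|^2/(|N(v)| + 6t/n)$. Assigning $I$ a single new color and invoking the induction hypothesis on $G - I$ (which has $n - |I|$ vertices and $\le t$ triangles) gives the target bound, provided $|I| \ga \sqrt{n}$ so that $100\sqrt{n - |I|} + 1 \le 100\sqrt{n}$.

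The main obstacle is ensuring $|I|$ is sufficiently large across all regimes of $t$. When $t \la n^{3/2}$ we have $6t/n \la |N(v)|$, so Tur\'an readily yields $|I| \ge |N(v)|/2 \ga \sqrt{n}$; and when $t \ga n^3$ the bound is trivial since $(6t)^{1/3} \ge n \ge \chi(G)$. The intermediate range is where the specific coefficient $6^{1/3}$ becomes essential and, I expect, the technical heart of the argument lies. A natural remedy is to refine the Tur\'an step---either iterating neighborhood extraction so that a single new color absorbs several independent sets before recursion, or invoking a structural lemma that produces a clique of size close to $(6t)^{1/3}$ when the graph is triangle-dense, which can then be colored with $(6t)^{1/3}$ fresh colors in one shot (consistent with the tight extremal example $K_{(6t)^{1/3}}$).
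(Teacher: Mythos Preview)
Your overall scaffolding matches the paper's proof: strong induction on $n$, handle the low-degree vertex case by extending a coloring of $G-v$, and in the high-degree case pull an independent set out of a neighborhood via Tur\'an and recurse. The gap you yourself flag in the intermediate range $n^{3/2} \la t \la n^3$ is real, and your proposed remedies (iterated neighborhood extraction, or extracting a large clique) are not how the paper closes it.

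The missing idea is that you must track the decrease in $t$, not only in $n$. In your argument the recursive call is on $G-I$ with parameters $(n-|I|,\,t)$, and you need $|I| \ga \sqrt{n}$; this simply fails when $t_v$ is large. The paper instead chooses $v$ to be a vertex with the \emph{minimum} triangle count $y$ (which in particular satisfies $y \le 3t/n$, so your Tur\'an computation still applies). The point of taking the minimum rather than merely a below-average vertex is that every vertex of the resulting independent set $I \subseteq N(v)$ is then incident to at least $y$ triangles, and since $I$ is independent no triangle can contain two of its vertices, so these triangle-sets are pairwise disjoint. Hence $G-I$ has at most $n-|I|$ vertices and at most $t - y|I|$ triangles, and the induction hypothesis is applied with \emph{both} parameters reduced. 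Writing $s = |I| \ge d/(2y/d+1)$ where $d = \lfloor 100\sqrt{n} + (6t)^{1/3} \rfloor$, one then only needs
\[
1 \;\le\; \frac{50\,s}{\sqrt{n}} \;+\; \frac{6^{1/3}\,y\,s}{3\,t^{2/3}},
\]
which after substituting the Tur\'an bound for $s$ becomes a linear inequality in $y$ that is easily checked at the endpoints $y=0$ and $y\to\infty$. This is exactly where the constant $6^{1/3}$ is used.
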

\begin{proof}

Let us define $f(n,t) = 2 \sqrt{n} + (6 t)^{1/3}$, and let $d = \lfloor f(n, t) \rfloor$. Suppose that some vertex $v$ of $G$ has degree at most $d-1$. In that case, apply the induction hypothesis to $G - v$, obtaining a coloring using at most $\lfloor f(n-1, t) \rfloor \leq d$ colors. At least one color in the range $\{1, \dots, d \}$ is not used by any a neighbor of $v$, so this coloring can be extended to $v$ as well.

So suppose that $G$ has minimum degree at least $d$. Among all vertices, select one vertex $v$ which participates in the \emph{minimum} number $y$ of triangles. The graph $G[N(v)]$ must have $y$ edges and at least $d$ vertices. By Theorem~\ref{turan-thm}, $G[N(v)]$ contains an independent set $I$ of size at least $s = \frac{d}{2 y/d + 1}$. Assign all vertices in $I$ one new color, and recurse on $G - I$.

As $v$ is chosen to be incident on the minimum number of triangles, every vertex of $I$ is itself incident on at least $y$ triangles; furthermore as $I$ is independent these triangles are all distinct. So $I$ is incident on at least $y s$ triangles, which are all removed in $G - I$. By induction hypothesis $\chi(G - I) \leq f(n - s, t - y s)$ so
\begin{align*}
\chi(G) &\leq 1 + f(n - s, t - y s) = 1 + 2 \sqrt{n - s} + 6^{1/3} (t - y s)^{1/3} \\
&\leq 1 + 2 \sqrt{n} - \frac{s}{\sqrt{n}} + 6^{1/3} (t^{1/3} - \frac{y s}{3 t^{2/3}}) = f(n,t) + 1 - \frac{s}{\sqrt{n}} - \frac{6^{1/3} y s}{3 t^{2/3}}
\end{align*}

We want to show that $\chi(G) \leq f(n,t)$; thus, it suffices to show that
\begin{equation}
\label{yy1}
\frac{s}{\sqrt{n}} + \frac{6^{1/3} y s}{3 t^{2/3}} \geq 1
\end{equation}

Substituting in the value for $s$, simple algebraic manipulations show that this is equivalent to showing:
\begin{equation}
\label{yy2}
d (\frac{d}{\sqrt{n}} - 1 ) + y \Bigl( \frac{6^{1/3} d^2}{3 t^{2/3}} - 2 \Bigr) \geq 0
\end{equation}

In order to show that (\ref{yy2}) holds, note that $d \geq \lfloor 2 \sqrt{n} \rfloor \geq \sqrt{n}$ and so we have $d (\frac{d}{\sqrt{n}} - 1 ) \geq 0$. Also, we have $d \geq 2 \sqrt{n} + (6 t)^{1/3} - 1  \geq (6 t)^{1/3}$, and so $\frac{6^{1/3} d^2}{3 t^{2/3}} - 2 \geq \frac{6^{1/3}( 6 t)^{2/3} }{3 t^{2/3}} - 2 = 0$. This completes the induction.
\end{proof}

\begin{theorem}
\label{t-prop2}
Suppose that $G$ contains $t$ triangles and $n$ vertices. Then
$$
\chi(G) \leq O \Bigl( \sqrt{\frac{n}{\log n}} + \frac{t^{1/3} (\log \log n)^{3/2}}{\log n} \Bigr) + (6 t)^{1/3} = O( \sqrt{\frac{n}{\log n}} ) + (6^{1/3} + o(1)) t^{1/3}
$$
\end{theorem}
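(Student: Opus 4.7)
The plan is to combine Proposition~\ref{tw-prop1} (which captures the sharp leading coefficient $6^{1/3}$ on $t^{1/3}$ but only with a crude $O(\sqrt{n})$ secondary term) with Theorem~\ref{ttprop2} (which captures the sharp $O(\sqrt{n/\log n})$ secondary term but only a loose constant on $t^{1/3}$). The interpolation is obtained by partitioning the vertex set according to local triangle count: apply Proposition~\ref{tw-prop1} on the small ``heavy'' part to preserve the leading coefficient, and apply Theorem~\ref{ttprop2} on the ``light'' remainder to obtain the Ajtai-strength $\sqrt{n/\log n}$ term.

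Concretely, I would fix a threshold $y_0$ (to be chosen) and write $V = A \sqcup B$, where $A$ is the set of vertices of local triangle count at least $y_0$. Since each triangle contributes to the local triangle count of exactly three vertices, $|A| \leq 3t/y_0$. Applying Proposition~\ref{tw-prop1} to $G[A]$ then gives $\chi(G[A]) \leq 100\sqrt{|A|} + (6t)^{1/3} \leq O(\sqrt{t/y_0}) + (6t)^{1/3}$, which retains the sharp leading constant. Since $G[B]$ has local triangle bound $y_0$, Theorem~\ref{ttprop2} applied to $G[B]$ yields $\chi(G[B]) \la \sqrt{n/\log n} + \frac{t^{1/3}\log\log(t^2/y_0^3)}{\log^{2/3}(t^2/y_0^3)}$. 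A union coloring then gives $\chi(G) \leq \chi(G[A]) + \chi(G[B])$.

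It then remains to choose $y_0$ so that both correction terms $\sqrt{t/y_0}$ and the Theorem~\ref{ttprop2} contribution are dominated by $O\bigl(\sqrt{n/\log n} + \frac{t^{1/3}(\log\log n)^{3/2}}{\log n}\bigr)$; a choice near $y_0 \approx t^{1/3}\log^2 n/(\log\log n)^3$ matches the first term to the target rate. Before the split I would dispose of degenerate regimes: if $t$ is small enough that $\sqrt{n/\log n}$ already dominates $t^{1/3}$, then Theorem~\ref{ttprop2} alone suffices (using the trivial $y \leq t$); if $t$ is so large that $(6t)^{1/3}$ absorbs the $\sqrt{n/\log n}$ term, then Proposition~\ref{tw-prop1} alone is within bounds. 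This reduces to the intermediate range where both parts of the claimed bound are active.

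The main obstacle is verifying that, with the chosen $y_0$, the Theorem~\ref{ttprop2} contribution from $G[B]$ actually fits under $\frac{t^{1/3}(\log\log n)^{3/2}}{\log n}$. A naive estimate of $\log\log(t^2/y_0^3)/\log^{2/3}(t^2/y_0^3)$ yields a factor of order $\log\log n/\log^{2/3} n$, which is asymptotically larger than the target $(\log\log n)^{3/2}/\log n$. Closing this gap will likely require exploiting that the triangle count $t_B$ of $G[B]$ satisfies $t_B \leq n y_0/3$ (often far smaller than $t$), or iterating the partition step on $B$ with decreasing thresholds, or sharpening the independent-set extraction inside the Proposition~\ref{tw-prop1} recursion by coloring the neighborhood of the selected vertex via Theorems~\ref{aks-thm3} or \ref{triangle-free-d} in place of Turán's inequality, thereby replacing the $100\sqrt{n}$ slack term with a $\sqrt{n/\log n}$-sized slack term directly in the induction.
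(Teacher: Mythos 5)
Your decomposition is exactly the paper's: threshold on local triangle count, apply Proposition~\ref{tw-prop1} to the heavy set $A$ (of size $\la t/y_0$) and Theorem~\ref{ttprop2} to the light set $B$. You have also correctly diagnosed the obstacle --- the naive contribution $t^{1/3}\log\log n/\log^{2/3} n$ from $\chi(B)$ overshoots the target $t^{1/3}(\log\log n)^{3/2}/\log n$ --- but none of your three proposed repairs is the one that works, so the proof is not complete as written.

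The missing idea is to track the \emph{split of the triangle count} between the two parts and exploit the concavity of $x^{1/3}$. Write $a$ for the number of triangles of $G[A]$ and $b$ for the number of triangles of $G[B]$, so $a+b\leq t$. Proposition~\ref{tw-prop1} applied to $G[A]$ gives $\chi(A)\leq O(\sqrt{t/y_0})+(6a)^{1/3}$, not $(6t)^{1/3}$: every triangle that lands in $B$ is a triangle \emph{missing} from $A$, and by concavity $(6a)^{1/3}\leq (6(t-b))^{1/3}\leq (6t)^{1/3}-\Omega(b/t^{2/3})$. Meanwhile Theorem~\ref{ttprop2} applied to $G[B]$ costs only $O\bigl(b^{1/3}\log\log(b^2/y_0^3)/\log^{2/3}(b^2/y_0^3)\bigr)$ in terms of $b$, not $t$. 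The combined error term is therefore
$$
-\Omega\Bigl(\frac{b}{t^{2/3}}\Bigr)+O\Bigl(\frac{b^{1/3}\log\log n}{\log^{2/3} n}\Bigr),
$$
and elementary calculus shows this is maximized over $b$ at $b\approx t(\log\log n)^{3/2}/\log n$, where its value is $O\bigl(t^{1/3}(\log\log n)^{3/2}/\log n\bigr)$ --- exactly the claimed correction. (For $b\la t/\log^3 n$ the positive term alone is already $\la t^{1/3}/\log n$.) With this balancing step the paper's choice $y_0=t^{1/3}\log^2 n$ suffices; your modified threshold is unnecessary, and no iteration or sharpening of Proposition~\ref{tw-prop1} is needed. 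Your first suggested fix ($t_B\leq ny_0/3$) points at the wrong quantity: what matters is not an absolute bound on $b$ but the trade-off that a large $b$ forces $a$ to be correspondingly smaller.
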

\begin{proof}
  If $t \leq (n/\log n)^{3/2}$, then this follows immediately from Theorem~\ref{ttprop2}. So let us suppose that $t > (n/\log n)^{3/2}$.
  
  Let $y = t^{1/3} \log^2 n$. Let $A$ denote the vertices of $G$ which are incident on at least $y$ triangles, and let $a$ be the number of triangles contained in $G[A]$. Similarly let $B = V - A$ and let $b$ be the number of triangles in $G[B]$. Note that $|A| \leq 3 t/y$ and $a + b \leq t$.

  By applying Proposition~\ref{tw-prop1} to $G[A]$ and Theorem~\ref{ttprop2} to $G[B]$, we get
  \begin{align*}
  \chi(G) &\leq \chi(A) + \chi(B) \leq 2 \sqrt{3t/y} + (6 a)^{1/3} + O(\sqrt{\frac{n}{\log n}} + \frac{b^{1/3} \log \log((b^2/y^3)}{\log^{2/3}(b^2/y^3)}) \\
  &= (6 a)^{1/3} + O \Bigl( \sqrt{\frac{n}{\log n}} + \frac{b^{1/3} \log \log(\frac{b^2}{t \log^6 n})}{\log^{2/3}(\frac{b^2}{t \log^6 n})} + \frac{t^{1/3}}{\log n} \Bigr)
\end{align*}
   As $a + b \leq t$, we have $(6 a)^{1/3} \leq (6 (t-b))^{1/3} \leq (6 t)^{1/3} - \frac{2^{1/3} b}{3^{2/3} t^{2/3}}$. Thus, collecting all relevant constant terms, we have shown
\begin{equation}
\label{b-e0}
\chi(G) \leq (6 t)^{1/3} + O \Bigl( \frac{t^{1/3}}{\log n} + \sqrt{ \frac{n}{\log n} }  + \frac{b^{1/3} \log \log ( \frac{b^2}{t \log^6 n}) }{\log^{2/3}(\frac{b^2}{t \log^6 n})} - \frac{C b}{t^{2/3}} \Bigr)
\end{equation}
for some constant $C > 0$.

Our next task is to show
\begin{equation}
\label{b-e1}
 \frac{b^{1/3} \log \log ( \frac{b^2}{t \log^6 n})}{\log^{2/3}( \frac{b^2}{t \log^6 n})} -\frac{C  b}{t^{2/3}} \la \frac{t^{1/3} (\log \log n)^{3/2}}{\log n}
\end{equation}

If $b \leq t/\log^3 n$, then
$$
\frac{b^{1/3} \log \log ( \frac{b^2}{t \log^6 n})}{\log^{2/3}( \frac{b^2}{t \log^6 n})} \la b^{1/3}  \la \frac{t^{1/3}}{\log n}
$$
and we have shown (\ref{b-e1}). Otherwise, if $b \geq t/\log^3 n$, then $\log( \frac{b^2}{t \log^6 n} ) \geq \log( \frac{t}{\log^{1/12} n} ) \ga \log n$, using our assumption that $t> (n/\log n)^{3/2}$. Thus, in this case, it suffices to show:
\begin{equation}
\label{b-e2}
\frac{b^{1/3} \log \log n}{\log^{2/3} n} - C b / t^{2/3} \la \frac{t^{1/3} (\log \log n)^{3/2}}{\log n}
\end{equation}

The LHS of (\ref{b-e2}) is negative unless $b \leq \frac{ (\log \log n)^{3/2} t}{C^{3/2}}$; when $b$ is in this range then $\frac{b^{1/3} \log \log n}{\log^{2/3} n} \la \frac{t^{1/3} (\log \log n)^{3/2}}{\log n}$ as desired.
\end{proof}

\begin{theorem}
\label{t-prop3}
Suppose that $G$ contains $t$ triangles and $m$ edges. Then
$$
\chi(G) \leq O \Bigl( \frac{m^{1/3}}{\log^{2/3} m} + \frac{t^{1/3} (\log \log m)^{3/2}}{\log m} \Bigr) + (6 t)^{1/3} = O \Bigl( \frac{m^{1/3}}{\log^{2/3} m} \Bigr) + (6^{1/3} + o(1)) t^{1/3}
$$
\end{theorem}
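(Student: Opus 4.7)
The plan is to follow the proof of Theorem~\ref{t-prop2} almost verbatim, replacing the vertex-count $n$ by the edge-count $m$ and replacing Theorem~\ref{ttprop2} by its edge-based analog Theorem~\ref{ttprop3}. Set the threshold $y = t^{1/3}\log^2 m$, and split the vertex set into $A$ (vertices incident on at least $y$ triangles) and $B = V \setminus A$. Since each triangle has three vertices, $|A| \leq 3t/y$, so Proposition~\ref{tw-prop1} applied to $G[A]$ gives $\chi(A) \leq 100\sqrt{|A|} + (6a)^{1/3} \la t^{1/3}/\log m + (6a)^{1/3}$, where $a$ is the number of triangles contained in $G[A]$. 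The first term has been engineered by the choice of $y$ to come out to $t^{1/3}/\log m$, which is absorbed into the error.

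For $B$, observe that $G[B]$ inherits local triangle bound $y$ (the triangle count of any vertex can only decrease upon deleting vertices) and has at most $m$ edges. Apply Theorem~\ref{ttprop3} to obtain
$$
\chi(B) \la \frac{m^{1/3}}{\log^{2/3} m} + \frac{b^{1/3} \log \log (b^2/y^3)}{\log^{2/3}(b^2/y^3)},
$$
where $b$ is the number of triangles inside $G[B]$. Since the triangles counted by $a$ and those counted by $b$ are disjoint subsets of triangles of $G$, we have $a + b \leq t$, and concavity of $x \mapsto x^{1/3}$ then yields $(6a)^{1/3} \leq (6t)^{1/3} - \frac{6^{1/3} b}{3 t^{2/3}}$. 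Summing $\chi(A) + \chi(B)$ gives
$$
\chi(G) \leq (6t)^{1/3} + O\Bigl(\frac{m^{1/3}}{\log^{2/3} m} + \frac{t^{1/3}}{\log m}\Bigr) + O\Bigl(-\frac{b}{t^{2/3}} + \frac{b^{1/3} \log \log(b^2/y^3)}{\log^{2/3}(b^2/y^3)}\Bigr).
$$

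The final step is to bound the $b$-dependent bracket, imitating the analysis in Theorem~\ref{t-prop2}. If $b \leq t/\log^3 m$, then the positive term is trivially $O(t^{1/3} \log \log m / \log m)$. Otherwise $b^2/y^3 \ga t/\log^{12} m$, so a boundary argument (the analog of the ``$t \leq n$'' step in Theorem~\ref{t-prop2}) lets us reduce to the regime where $\log(b^2/y^3) \ga \log m$; then maximizing the scalar function $-b/t^{2/3} + C b^{1/3} \log \log m / \log^{2/3} m$ over $b \geq 0$ (critical point at $b \asymp t (\log \log m)^{3/2}/\log^{3/2} m$) gives peak value $O(t^{1/3} (\log \log m)^{3/2} / \log m)$, matching the theorem. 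The $(6^{1/3}+o(1)) t^{1/3}$ form is immediate from $(\log \log m)^{3/2}/\log m = o(1)$.

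The main obstacle is the boundary regime where $t$ is not much larger than $m$, since then $\log(b^2/y^3)$ can be $o(\log m)$ and the $b^{1/3}/\log^{2/3}$ term loses its savings; this is the same subtlety swept under the rug by the parenthetical ``if $t \leq n$ then Theorem~\ref{t-prop2} easily holds'' in the preceding proof, and deserves explicit treatment. Fortunately here it should dispatch more cleanly: when $t \leq m$, the target bound already contains $m^{1/3}/\log^{2/3} m$, which absorbs even the trivial $(6t)^{1/3} \leq 6^{1/3} m^{1/3}$ contribution (up to constants on the $t^{1/3}$ term), and in this regime Theorem~\ref{prop0a} applied with $y$ chosen from the actual graph gives the desired bound directly without invoking the $b$-optimization at all.
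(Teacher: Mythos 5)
Your overall route is exactly the paper's: Theorem~\ref{t-prop3} is proved by repeating the argument of Theorem~\ref{t-prop2} with $m$ in place of $n$ and Theorem~\ref{ttprop3} in place of Theorem~\ref{ttprop2}, and your decomposition, the choice $y = t^{1/3}\log^2 m$, the concavity step $(6a)^{1/3} \leq (6t)^{1/3} - 6^{1/3} b/(3t^{2/3})$, and the optimization over $b$ are all as intended. You are also right that the only delicate point is the regime where $\log(b^2/y^3)$ may be $o(\log m)$, which the paper's one-line proof does not address.

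Your dispatch of that regime, however, does not work. First, $m^{1/3}/\log^{2/3} m$ does \emph{not} absorb a term of order $m^{1/3}$ --- it is smaller by a factor of $\log^{2/3} m$ --- so when $t \approx m$ a bound $C t^{1/3}$ with an unspecified constant $C$ does not imply the theorem: the statement requires coefficient $6^{1/3}+o(1)$ on $t^{1/3}$, and the available slack is only $O(m^{1/3}/\log^{2/3}m) + o(t^{1/3})$. (Contrast with Theorem~\ref{t-prop2}, where $t \le n$ gives $t^{1/3} \le n^{1/3} = o(\sqrt{n/\log n})$, so there any $O(t^{1/3})$ bound really does suffice; the literal edge analog of that parenthetical is false.) Second, Theorem~\ref{prop0a} does not give the desired bound directly: its term $m^{1/4}y^{1/4}/\log^{3/4}(m/y)$ carries an unspecified constant and can exceed $(6^{1/3}+o(1))t^{1/3}+O(m^{1/3}/\log^{2/3}m)$, e.g.\ when $t \approx m$ and $y \approx m^{2/3}$. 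The fix stays inside your $b$-optimization: when $t > m$ you have $b^2/y^3 \ga m/\log^{12}m$, hence $\log(b^2/y^3)\ga \log m$ and the computation proceeds as you wrote it; when $\log^{24} m \le t \le m$ you instead have $b^2/y^3 \ge t/\log^{12}m \ge \sqrt t$, so $\log(b^2/y^3) \ga \log t$ and $\log\log(b^2/y^3)\la \log\log t$, and the same optimization yields $O(t^{1/3}(\log\log t)^{3/2}/\log t) \le O(m^{1/3}(\log\log m)^{3/2}/\log m) = O(m^{1/3}/\log^{2/3}m)$ by monotonicity in $t$; and when $t < \log^{24}m$ every $t$-dependent term is polylogarithmic in $m$ and trivially absorbed. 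With that substitution your argument is complete.
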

\begin{proof}
The proof is nearly identical to Theorem~\ref{t-prop2}, using Theorem~\ref{ttprop3} instead of Theorem~\ref{ttprop2}.
\end{proof}

By considering a clique of $(6 t)^{1/3}$ vertices or a triangle-free graph, we can easily see that Theorems~\ref{t-prop2} and~\ref{t-prop3} are tight up to lower-order terms.

\section{Conjectured tight bounds}
\label{conj-sec}
The following conjecture seems natural:
\begin{conjecture}
\label{conj1}
Suppose $G$ has $n$ vertices, $t$ triangles, and local triangle bound $y$. Then
$$
\chi(G) \la \frac{t^{1/3}}{\log^{2/3} (t^2/y^3) } + \sqrt{ \frac{n}{\log n}}
$$
\end{conjecture}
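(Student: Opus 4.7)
The plan is to locate and remove the single source of slack in the proof of Theorem~\ref{ttprop2}. That proof partitions $V$ by triangle-incidence class, eliminates dense neighborhoods at a cost of $O(t/d^2)$ colors via Tur\'{a}n, and then invokes Lemma~\ref{ff-lemma} on the residual ``decaying'' graph at cost $O\bigl(\frac{d \log \log(d^2/y)}{\log(d^2/y)}\bigr)$. All three contributions combine with $d = (ft)^{1/3}+\sqrt{n}$, and every piece except the $\log \log$ in Lemma~\ref{ff-lemma} already matches Conjecture~\ref{conj1} exactly. Thus it suffices to produce a strengthened version of Lemma~\ref{ff-lemma} without the $\log \log$ factor and plug it into the existing argument verbatim.

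First, I would formulate the improved lemma: conditional on Johnson's conjecture, if $G$ has local triangle bound $y$ and admits a vertex partition $V = A_1 \sqcup \cdots \sqcup A_k$ with $|N(v) \cap A_j| \leq d x^{i-j}$ for every $v \in A_i$ and some $x \geq 1 + \Omega(1)$, then $\chi(G) \la d/\log(d^2/y)$. The proof would attempt a single top-down list-coloring sweep rather than the ``parallel sub-colorings indexed by $s = \lceil \log_x f \rceil$'' device which is responsible for the $\log \log$ penalty. Each vertex $v \in A_j$ receives a palette of size $\Theta(d/\log(d^2/y))$; by the time we reach layer $A_j$, the geometric tail $\sum_{i < j} d x^{i-j}$ of already-colored neighbors is $O(d/\log(d^2/y))$, so a constant fraction of each palette survives. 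Johnson's conjecture would then supply the list-coloring extension over each layer with no further per-layer overhead, morally allowing Theorem~\ref{aks-thm3} to be iterated across the $k$ layers without a multiplicative $\log \log$ penalty.

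Second, I would substitute the improved lemma verbatim into the proof of Theorem~\ref{ttprop2}. The Tur\'{a}n-based removal and the application of Theorem~\ref{prop0} to $G[A_1 \cup \cdots \cup A_k]$ go through unchanged; only the term $\frac{d \log \log(d^2/y)}{\log(d^2/y)}$ is replaced by $\frac{d}{\log(d^2/y)}$. The same parameter choice $d = (ft)^{1/3} + \sqrt{n}$ then delivers $\chi(G) \la \sqrt{n/\log n} + t^{1/3}/f^{2/3}$ in the preliminary ``weak'' form analogous to the bound (\ref{trt1}), and the same split $V = A \cup B$ with threshold $z = t/n$ bootstraps this to the full statement of Conjecture~\ref{conj1}.

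The main obstacle is of course Johnson's conjecture itself, which is open. A secondary technical point is making the single-sweep list-coloring argument precise: one must verify that the hypotheses needed to invoke Johnson's conjecture (bounded maximum degree, bounded local triangle count, and palette size $\Theta(d/\log(d^2/y))$) transfer cleanly from the global hypotheses of the improved lemma to each layer's residual subproblem, even after prior layers have been colored and the surviving palettes have shrunk. Apart from this, the argument is a direct plug-in to the existing framework of Theorem~\ref{ttprop2}.
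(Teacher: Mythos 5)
Your proposal has a genuine gap at its central step. The claimed strengthening of Lemma~\ref{ff-lemma} does not follow from the ``single top-down sweep'' you describe: for $x = 1 + \Omega(1)$ constant (and in the application inside Theorem~\ref{ttprop2} one only has $x = \sqrt{2}$), the number of already-colored neighbors of a vertex $v \in A_j$ in the later layers is $\sum_{l \geq 1} d x^{-l} = \Theta\bigl(\frac{d}{x-1}\bigr) = \Theta(d)$, not $O(d/\log(d^2/y))$ as you assert. This completely exhausts a palette of size $\Theta(d/\log(d^2/y))$, which is exactly why the paper's Lemma~\ref{ff-lemma} must split the layers into $s = \lceil \log_x f \rceil$ interleaved groups (so that the effective decay rate becomes $x^s \geq f$) and thereby incurs the $\log\log$ factor. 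Invoking Johnson's conjecture does not repair this: that conjecture relates the fractional chromatic number to the Hall ratio, and neither quantity composes across sequentially colored layers the way list colorings do, so there is no mechanism by which it ``supplies the list-coloring extension with no per-layer overhead.''

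The paper's conditional proof takes an entirely different route that never touches Lemma~\ref{ff-lemma}. From Johnson's conjecture and Proposition~\ref{rhobound0} one gets Conjecture~\ref{fconj} (triangle-free $d$-degenerate graphs have $\chi_{\text{f}} \la d/\log d$); a random sparsification (Proposition~\ref{fconj1}) upgrades this to $\chi_{\text{f}}(G) \la d/\log(d^2/y)$ for $d$-degenerate $G$ with local triangle bound $y$; and then Proposition~\ref{ttprop1a} runs an induction that at each step extracts an independent set incident on either $\Omega(\sqrt{n})$ vertices or $\Omega((tf)^{2/3})$ triangles. The essential point you are missing is why fractional chromatic number (rather than list chromatic number) is the right tool: it yields an independent set that is large with respect to an \emph{arbitrary} vertex weighting, and the paper weights each vertex by its triangle count, so the extracted independent set destroys a $\ga \log(d^2/y)/d$ fraction of all triangles. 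That weighted-extraction mechanism is what eliminates the $\log\log$, and your proposal contains no substitute for it.
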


This conjecture would strengthen Theorem~\ref{ttprop2}, would give Proposition~\ref{prop0} as a special case (as $t \leq n y$), and would match the lower bound Proposition~\ref{lb0}.

As further evidence for Conjecture~\ref{conj1}, we show that it relates to an natural conjecture on degeneracy and fractional chromatic number in triangle-free graphs:
\begin{conjecture}
\label{fconj}
Suppose that $G$ is $d$-degenerate and triangle-free. Then $\chi_{f}(G) \la \frac{d}{\log d}$.
\end{conjecture}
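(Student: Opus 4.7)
The plan is to attack Conjecture~\ref{fconj} directly via LP duality. Writing
\[
\chi_{\text{f}}(G) = \sup_{w \geq 0} \frac{w(V)}{\alpha(G,w)},
\]
it suffices to exhibit, for every nonnegative weight $w$ on a triangle-free $d$-degenerate graph $G$, an independent set of weight $\ga w(V) \log d / d$. In the unweighted case ($w \equiv 1$) this is essentially Shearer's theorem: $d$-degeneracy forces average degree at most $2d$, and a convexity/Jensen step applied to Shearer's vertex-by-vertex bound of $(\log \Delta_v)/\Delta_v$ yields $\alpha(G) \ga n \log d / d$. The content of the conjecture lies in extending this to arbitrary $w$.

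The natural route is a weighted Shearer-type inequality adapted to the degeneracy ordering. Fix an ordering $v_1 \prec \cdots \prec v_n$ with back-neighborhoods $N^+(v)$ of size at most $d$, and try to show
\[
\alpha(G,w) \;\ga\; \sum_{v} w(v) \cdot \frac{\log(|N^+(v)|+2)}{|N^+(v)|+1}.
\]
After a preprocessing step that strips off vertices of small back-degree (which can be handled by a trivial $O(1)$-coloring) and a convexity argument on the rest, this would yield $\alpha(G,w) \ga w(V)\log d / d$ as required. My first attempt at the displayed inequality would be a hard-core / local-occupancy argument in the style of Davies, Jenssen, Perkins \& Roberts: sample $I$ from the hard-core distribution on $G$ at fugacity $\lambda \approx (\log d)/d$, and use triangle-freeness (which forces each $N^+(v)$ to be an independent set of size $\leq d$) to lower-bound the weighted marginal $w(v)\,\Pr[v \in I]$ vertex by vertex. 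A second, more technical route would be to adapt Molloy's entropy-compression proof of Johansson's theorem, carrying the weights through the analysis and replacing the role of the maximum degree with the maximum back-degree $d$.

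The main obstacle, and presumably the reason the conjecture remains open, is precisely this weighted-and-degenerate combination. The existing Shearer- and Johansson-type arguments are phrased in terms of the true maximum degree $\Delta$, and their proofs use $\Delta$ uniformly to control local quantities in the hard-core partition function or to bound entropy-compression increments; when $\Delta$ greatly exceeds the degeneracy $d$ (consider a star, where $d=1$ but $\Delta=n-1$), these techniques lose their uniformity, and suppressing the contribution of the few high-degree vertices while retaining the logarithmic gain is delicate. Indeed, the fact that Proposition~\ref{rhobound0} yields the corresponding Hall-ratio bound outright, yet the paper is forced to route the fractional statement through the unresolved Conjecture~\ref{rho-conjecture}, is precisely symptomatic of where the genuine difficulty sits.
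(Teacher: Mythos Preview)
There is nothing to compare here, because the paper does not prove this statement: it is stated as a \emph{conjecture}, not a theorem. The only justification the paper offers is the one-line remark that Conjecture~\ref{fconj} would follow immediately from Proposition~\ref{rhobound0} (the easy Hall-ratio bound) together with the open Conjecture~\ref{rho-conjecture} on $\chi_{\text{f}}(G)$ versus $\rho(G)$. So there is no ``paper's own proof'' for you to match.

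Your proposal, correspondingly, is not a proof either. You correctly identify the reformulation via LP duality (find, for every weight $w$, an independent set of weight $\ga w(V)\log d/d$), and you sketch two plausible lines of attack (a weighted Shearer/hard-core argument along a degeneracy ordering, and a weighted adaptation of Molloy's entropy-compression proof). But you then --- accurately --- explain why neither line goes through: the known Shearer/Johansson machinery is tied to the maximum degree $\Delta$, not to the degeneracy $d$, and when $\Delta \gg d$ the uniform local control those arguments rely on breaks down. That diagnosis is correct and matches the paper's own framing (routing the fractional statement through the unresolved Conjecture~\ref{rho-conjecture} precisely because the direct argument is unavailable). In short, you have written a sound discussion of why the conjecture is hard, not a proof of it; and since the paper also leaves it open, that is the appropriate status.
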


In a draft version of this paper, we had formulated a more general conjecture that fractional chromatic number is bounded by a constant times Hall ratio. One can easily show a $d$-degenerate triangle-free graph has Hall ratio $O(\frac{d}{\log d})$, and so this conjecture would have immediately shown Conjecture~\ref{fconj}. This broader conjecture has subsequently been refuted \cite{refute-paper, refute-paper2}, but we believe that it may still hold in certain restricted graph classes or settings. Even if this does not hold in general, it may still be some heuristic justification for Conjecture~\ref{fconj}. 

As two other pieces of evidence for Conjecture~\ref{fconj}, we note that \cite{aks} showed that a triangle-free, $d$-degenerate graph may have $\chi(G)$ as large as $d$; the graph $G$ which achieves this indeed has $\chi_{f}(G) \la \frac{d}{\log d}$. Finally, we note that \cite{esperet} has found other applications of Conjecture~\ref{fconj}, which would cleanly show certain other bounds regarding induced bipartite graphs. Slightly weaker versions of these bounds have been found by other more laborious methods.

We now show a number of consequences of Conjecture~\ref{fconj}:
\begin{lemma}
  \label{ttprop1cc}
  Suppose that  $G$ is $d$-degenerate and has local triangle bound $y$.  If Conjecture~\ref{fconj} holds, then
  $$
  \chi_f(G) \la \frac{d}{\log(d^2/y)}
  $$
\end{lemma}
\begin{proof}
  Consider some weighting function $w: V \rightarrow \mathbb R_{+}$. We first claim that there is a vertex set $U \subseteq V$ such that $G[U]$ is $d' = O( d/\sqrt{y})$-degenerate, is triangle-free, and has $w(U) \ga w(V)/\sqrt{y}$. We show this by a randomized construction: let us first choose some orientation $J$ of $G$, such that every vertex in $v$ has at most $d$ out-neighbors with respect to $J$.

  Now consider forming a subset $W \subseteq V$, wherein each $v \in V$ goes into $W$ independently with probability $p = \frac{1}{10 \sqrt{y}}$. We then let $U$ denote the set of vertices $v \in Y$ such that $v$ has no triangles in $G[W]$ and at most $d/\sqrt{y}$ of the out-neighbors of $v$ (with respect to $J$) are in $Y$.
    A simple Markov's inequality calculation shows that, conditional on $v \in W$, the vertex $v$ survives to $U$ with probability $\Omega(1)$. Therefore, $\bE[ w(U)] \ga \frac{w(V)}{\sqrt{y}}$. Also, by construction, the graph $G[U]$ is triangle-free. Since every vertex in $U$ has at most $d/\sqrt{y}$ out-neighbors with respect to $J$, $G[U]$ is also $d/\sqrt{y}$-degenerate. In particular, there exists a vertex set $U$ that satisfies the desired properties.

    Now apply Conjecture~\ref{fconj} to $G[U]$, obtaining an independent set $I$ with
    $$
    w(I) \ga \frac{w(U) \log(d/\sqrt{y}) }{ d/\sqrt{y} } = \frac{ \frac{w(V)}{\sqrt{y}} \log(d/\sqrt{y})}{d / \sqrt{y}} \approx \frac{w(V) \log(d^2/y)}{d}
      $$
\end{proof}
  
\begin{lemma}
  \label{ttprop1aa}
  Suppose $G$ has $n$ vertices, $t$ triangles, and local triangle bound $y$. If Conjecture~\ref{fconj} holds, then $G$ has an independent set $I$ satisfying either (i) $|I| \geq \Omega(\sqrt{n \log n})$ or (ii) $I$ touches  $\Omega(t^{2/3} \log^{2/3}(\frac{t^2}{y^3}))$ triangles in $G$.
\end{lemma}
\begin{proof}
  Let us set $f = \log(t^2/y^3)$ and $d = (t f)^{1/3} + \sqrt{n \log n}$. Let us consider a number of cases.
  
\textbf{CASE I: $\bm{G}$ is not $\bm{d}$-degenerate.} Then there is a non-empty vertex set $U \subseteq V$ such that $G[U]$ has minimum degree at least $d$. Let $w \in U$ be the vertex of $U$ which is incident upon the fewest triangles of $G$; suppose that $w$ is incident upon $k$ triangles. By Theorem~\ref{turan-thm}, there is an independent set $I \subseteq N(w) \cap U$ with $|I| \geq \frac{d}{1 + \frac{2 k}{d}}$.

If $k \leq d$, then $|I| \ga d \geq \sqrt{n \log n}$. Otherwise, if $k \geq d$, then $|I| \ga d^2/k$. Since $w$ is incident on the smallest number of triangles of $U$, every vertex in $I$ must be incident upon at least $k$ triangles. So $I$ is incident upon at least $|I| k \ga d^2 \geq (t f)^{2/3}$ triangles, as desired.

\textbf{CASE II: $\bm{G}$ is $\bm{d}$-degenerate and $\bm{\sqrt{n \log n} < (t f)^{1/3}}$.} Define the weighting function $w: V \rightarrow \mathbb R$ by setting $w(v)$ to be the number of triangles incident upon $v$. By Lemma~\ref{ttprop1cc}, there is an independent set $I$ with $\sum_{v \in I} w(v) \ga \frac{\sum_{v \in V} w(v)}{d/\log(d^2/y)} \ga \frac{t \log(d^2/y)}{d}$. We compute the log term here as
$$
\log(d^2/y) \geq \log( (t f)^{2/3} / y ) = \log( (t^2/y^3)^{1/3} \log^{2/3} (t^2/y^3) ) \approx f
$$
Using the bound $(t f)^{1/3} > \sqrt{n \log n}$, we thus estimate
$$
\frac{t \log(d^2/y)}{d} \ga \frac{t f}{(t f)^{1/3}} \ga (t f)^{2/3}
$$

\textbf{CASE III: $\bm{G}$ is $\bm{d}$-degenerate and $\bm{\sqrt{n \log n} \geq (t f)^{1/3}}$.} Let $A$ denote the vertices in $G$ incident on at most $10 t/n$ triangles; we must have $|A| \ga n$. Since $G$ is $d$-degenerate, the graph $G[A]$ is also $d$-degenerate and has local triangle bound $y' = 10 t/n$. Define the weighting function $w: V \rightarrow \mathbb R$ by setting $w(v) = 1$. By Lemma~\ref{ttprop1cc}, there is an independent set $I \subseteq A$ with $|I| = \sum_{v \in I} w(v) \ga \frac{\sum_{v \in A} w(v)}{d/\log(d^2/y')} \ga \frac{n \log(n d^2/t)}{d}$. We compute the log term here as
$$
\log(n d^2/t) \geq \log(n \times n \log n / (n^{3/2}/f)) = \log( f \sqrt{n} \log n) \ga \log n
$$

Also, in this case, we have $d \leq \sqrt{n \log n}$, and so overall $|I| \ga \sqrt{n \log n}$.
\end{proof}

\begin{proposition}
  \label{ttprop1a}
Conjecture~\ref{fconj} implies Conjecture~\ref{conj1}.
\end{proposition}
\begin{proof}
We will show that
\begin{equation}
\label{trt2}
\chi(G) \leq C \Bigl( \sqrt{\frac{n}{\log n}} + \frac{t^{1/3}}{\log^{2/3}(t^2/y^3)} \Bigr)
\end{equation}
by induction on $n$, where $C$ is some sufficiently large universal constant. Let us set $f = \log(t^2/y^3)$. We may assume that $n, t$, and $f$ are larger than any needed constant; otherwise, this follows from Theorem~\ref{ttprop2} if the constant $C$ is sufficiently large.

Let $I$ be the independent set guaranteed by Lemma~\ref{ttprop1aa}. We assign all the vertices in $I$ one new color, remove $I$ from the graph, and apply our induction hypothesis to color $G - I$.

First, suppose that $I$ is incident upon at least $r = c (t f)^{2/3}$ triangles for some constant $c \leq 1$. These triangles are all removed in $G - I$, and so by induction hypothesis we have
\begin{equation}
  \label{tpp1}
\chi(G) \leq 1 + C ( \sqrt{\frac{n}{\log n}} + \frac{(t - r)^{1/3}}{\log^{2/3}((t - r)^2/y^3)} )
\end{equation}

 Now consider the function $F(x) = x^{1/3}/\log^{2/3} (x^2/y^3)$. Simple calculus shows that this is an increasing concave-down function as long as $x \geq a y^{3/2}$ for some constant $a > 0$.  
 Using the fact that $r = c (t f)^{2/3} \leq (t \log t)^{2/3}$ and that $t,r,f$ are all larger than any needed constants, we observe that $t - r \geq t/2 \geq \frac{ e^{f/2} }{2} y^{3/2} \geq a y^{3/2}$. Therefore, we may bound the expression in the RHS of (\ref{tpp1}) by its tangent lines, i.e.
$$
\chi(G) \leq 1 + C ( \sqrt{\frac{n}{\log n}} + F(t - r) ) \leq 1 + C ( \sqrt{\frac{n}{\log n}} + F(t) - r F'(t)) = 1 + C( \sqrt{\frac{n}{\log n}} + \frac{t^{1/3}}{f^{2/3}} - c (t f)^{2/3} \frac{ f - 4 }{t^{2/3} f^2} )
$$

To show that $\chi(G) \leq C ( \sqrt{ \frac{n}{\log n}} + \frac{t^{1/3}}{f^{2/3}})$ it thus suffices to show that $1 - C c \frac{(t f)^{2/3} (f - 4)}{3 t^{2/3} f^{5/3}}  \leq 0$. By taking $C$ to be sufficiently large, this follows immediately from our assumption that $f$ is larger than any needed constant. Thus the induction   holds in this case.

Next, if $|I| \geq c \sqrt{n \log n}$ for constant $c$ this gives
$$
\chi(G) \leq 1 + C ( \sqrt{ \frac{n - c \sqrt{n \log n}}{\log(n - c \sqrt{n \log n})}} + \frac{t^{1/3}}{\log^{2/3}(t^2/y^3)} )
$$

By a similar tangent line argument, we see that
$$
\chi(G) \leq 1 + C ( \sqrt{\frac{n}{\log n}} + \frac{t^{1/3}}{\log^{2/3}(t^2/y^3)} ) - \frac{C c (\log n - 1)}{2 \log n}
$$
This is at most $C (\sqrt{\frac{n}{\log n}} + \frac{t^{1/3}}{\log^{2/3}(t^2/y^3)} )$ for $C$ a sufficiently large constant, and the induction again proceeds in this case.
\end{proof}

From Proposition~\ref{ttprop1a}, we can tighten many of the bounds in our paper. We omit the proofs since they are essentially identical to proofs we have already encountered.
\begin{proposition}
Suppose $G$ has $n$ vertices, $m$ edges, $t$ triangles, and local triangle bound $y$. If Conjecture~\ref{fconj} holds then we have the following bounds:
\begin{enumerate}
\item $\chi(G) \la \frac{t^{1/3}}{\log^{2/3}(t^2/y^3)}  + \frac{m^{1/3}}{\log^{2/3} m}$
\item $\chi(G) \leq O \Bigl( \sqrt{\frac{n}{\log n}} + \frac{t^{1/3}}{\log n} \Bigr) + (6 t)^{1/3} = O( \sqrt{\frac{n}{\log n}} ) + (6^{1/3} + o(1)) t^{1/3}$
\item  $\chi(G) \leq O \Bigl( \frac{m^{1/3}}{\log^{2/3} m} + \frac{t^{1/3} }{\log m} \Bigr) + (6 t)^{1/3} = O \Bigl( \frac{m^{1/3}}{\log^{2/3} m} \Bigr) + (6^{1/3} + o(1)) t^{1/3}$
\end{enumerate}

\end{proposition}

\section{Acknowledgments}
Thanks to Vance Faber, Paul Burkhardt, Louis Ibarra, Aravind Srinivasan, and anonymous journal reviewers for helpful suggestions,  discussions, and proofreading. Thanks to Seth Pettie for some clarifications about Theorem~\ref{triangle-free-d}. Thanks to Tran Manh Tuan for clarification about the paper \cite{poljak-tuza}. Thanks to Hsin-Hao Su for some discussions about fractional chromatic number.


\begin{thebibliography}{1}

\bibitem{ajtai}
Ajtai, M., Koml\'{o}s, J., Szemer\'{e}di, E.: A note on Ramsey numbers. Journal of Combinatorial Theory Series A 29, pp. 354-360 (1980)

\bibitem{aks}
Alon, N., Krivelevich, M., Sudakov, B.: Coloring graphs with sparse neighborhoods. Journal of Combinatorial Theory, Series B 77.1, pp. 73-82 (1999)

\bibitem{refute-paper} 
Blumenthal, A., Lidicky, B., Martin, R., Norin, S., Pfender, F., Volec, J.: Counterexamples to a conjecture of Harris on Hall ratio. arxiv:1811.1116 (2018)

\bibitem{bohman}
Bohman, T., Mubayi, D.: Independence number of graphs with a prescribed number of cliques. arxiv:1801.01091 (2018)

\bibitem{refute-paper2}
  Dvo\u{r}\'{a}k, Z., Ossona de Mendez, P., Wu, H.: $1$-subdivisions, fractional chromatic number and Hall ratio. arxiv: 1812:07327 (2018)
  
\bibitem{esperet}
Esperet, L., Kang, R., Thomass\'{e}, S.: Separation choosability and dense bipartite induced subgraphs. arxiv: 1802.03727 (2018)


\bibitem{gimbel}
Gimbel, J., Thomassen, C.: Coloring triangle-free graphs with fixed size. Discrete Mathematics 219-1, pp. 275-277 (2000)


\bibitem{kwan}
Kwan, M., Letzter, S., Sudakov, B., Tran, T.: Dense induced bipartite subgraphs in triangle-free graphs. arxiv: 1810.12144 (2018)

\bibitem{kim}
Kim, J. H.: The Ramsey number $R(3,t)$ has order of magnitude $t^2/\log t$. Random Structures \& Algorithms 7-3, pp. 173-207 (1995).

\bibitem{molloy}
Molloy, M., Reed, B.: Graph colouring and the probabilistic method. Algorithms and Combinatorics, Springer (2001)

\bibitem{molloy2} 
Molloy, M.: The list chromatic number of graphs with small clique number. arxiv:1701.09133 (2017)

\bibitem{nilli}
Nilli, A.: Triangle-free graphs with large chromatic number. Discrete Mathematics 211-1, pp. 261-262 (2000)

\bibitem{pettie}
Pettie, S., Su, H.: Distributed coloring algorithms for triangle-free graphs. Information and Computation 243, pp, 263-280 (2015)

\bibitem{poljak-tuza}
Poljak, S., Tuza, Z.: Bipartite subgraphs of triangle-free graphs. SIAM Journal on Discrete Math 7-2, pp. 307-313 (1994)

\bibitem{vu}
Vu, V.: A general upper bound on the list chromatic number of locally sparse graphs. Combinatorics, Probability, and Computing 11, pp. 103-111 (2002)

\end{thebibliography}
\end{document}